\newtheorem{theorem}{Theorem}[section]
\newtheorem{lem}[theorem]{Lemma}
\newtheorem{cor}[theorem]{Corollary}
\numberwithin{equation}{section}
\renewcommand{\L}{\mathcal{L}}
\newcommand{\e}{\varepsilon}
\renewcommand{\a}{\alpha}
\newcommand{\N}{\mathbb{N}}
\newcommand{\Z}{\mathbb{Z}}
\renewcommand{\leq}{\leqslant}
\renewcommand{\geq}{\geqslant}
\renewcommand{\d}{\delta}
\renewcommand{\L}{\Lambda}
\renewcommand{\pmod}[1]{\allowbreak\mkern7mu({\operator@font mod}\,\,#1)}
\newcommand{\be}{\begin{equation}}
\newcommand{\ee}{\end{equation}}
\renewcommand{\a}{\ensuremath{\alpha}}
\renewcommand{\leq}{\leqslant}
\renewcommand{\geq}{\geqslant}
\newcommand{\Q}{\mathbb Q}
\newcommand{\R}{\mathbb R}
\renewcommand{\L}{\Lambda}
\renewcommand{\a}{\alpha}
\newcommand{\g}{\gamma}
\renewcommand{\leq}{\leqslant}
\renewcommand{\geq}{\geqslant}
\begin{document}

\title{Trigonometric series with noninteger harmonics}
\author{Mikhail R. Gabdullin}
\date{}
\address{
Steklov Mathematical Institute,
Gubkina str., 8, Moscow, Russia, 119991}
\email{gabdullin.mikhail@yandex.ru, gabdullin@mi-ras.ru} 

\begin{abstract}
Let $\{c_k\}$ be a nonincreasing sequence of positive numbers  (more general classes of sequences are also considered), and $\a>0$ be not an integer. We find necessary and sufficient conditions for the uniform convergence of the series $\sum_k c_k\sin k^{\a}x$ and $\sum_k c_k\cos k^{\a}x$ on the real line and its bounded subsets.
\end{abstract}

\date{\today}

\maketitle

\section{Introduction} 

Let $\{c_k\}_{k=1}^{\infty}$ be a nonincreasing sequence of positive numbers, and $\a>0$. We investigate necessary and sufficient conditions for the series
\begin{equation}\label{1.1}
\sum_{k=1}^{\infty}c_k\sin k^{\a}x
\end{equation}
and
\begin{equation}\label{1.2}
\sum_{k=1}^{\infty}c_k\cos k^{\a}x
\end{equation}
to converge uniformly on the real line $\R$ and its bounded subsets. 
Trivially, the condition $\sum_kc_k<\infty$ is sufficient, but can one weaken it? This question for the sine series (\ref{1.1}) was considered in several papers. More than a century ago it was shown in \cite{CJ} that in the case $\a=1$ the series (\ref{1.1}) converges uniformly on $[0,\pi]$ (equivalently, on the real line $\R$) if and only if $c_kk\to0$ as $k\to\infty$ (note that, under the monotonicity assumption, $\sum_kc_k<\infty$ implies $c_kk\to0$; see Lemma \ref{lem2.1}). Since then, many generalizations of this theorem have also been obtained, where the same was shown for more general classes of sequences $\{c_k\}$ (that is, the requirement of its monotonicity was relaxed); see, for instance, \cite{Nur}, \cite{Ste}, \cite{Lei}, \cite{Tik07}, and also \cite{Og} for an overview of such results.

In the recent paper \cite{Kes} it was proved that in the case $\a=1/2$ the condition $c_kk\to0$, $k\to\infty$, is necessary and sufficient for the uniform convergence of the series (\ref{1.1}) on $[0,\pi]$, and that in the case $\a=2$ it converges uniformly on $[0,\pi]$ (equivalently, on $\R$) if and only if $\sum_kc_k<\infty$; the case $\a=1/2$ is also discussed in the wonderful book \cite{Sel} (see Problem IV.5.10). As for the case of general $\a$, a nice breakthrough was made in \cite{Og}. The main results of that paper are the following: firstly, for any odd $\a\in\N$, the series (\ref{1.1}) converges uniformly on $\R$ if and only if $c_kk\to0$, $k\to\infty$ (the same is shown for the series $\sum_k\sin f(k)x$, where $f$ is  an odd polynomial with rational coefficients); secondly, for any $\a\in(0,2)$ the same condition is equivalent to the uniform convergence of the series (\ref{1.1}) on any bounded subset of $\R$; and thirdly, for any even $\a\in\N$ its uniform convergence on $\R$ is equivalent to the condition $\sum_kc_k<\infty$. It is also noted in \cite{Og} that the results are valid for the sequences $\{c_k\}$ from RBVS class (rest bounded variation sequences), that is, satisfying the condition
$$\sum_{k\geq l}|c_k-c_{k+1}| \leq Vc_l
$$
for any $l$ with an absolute constant $V$; clearly, any nonincreasing sequence of positive numbers belongs to RBVS class.    
 
In what follows, we consider (unless otherwise stated) the sequences $\{c_k\}$ with the properties
\begin{equation}\label{1.3}
c_k \leq Ac_l, \quad k\geq l, 
\end{equation}
and 
\begin{equation}\label{1.4}
\sum_{k=l}^{2l-1}|c_k-c_{k+1}| \leq Bc_l, \quad l\in\N, 
\end{equation} 
where $A, B > 0$ are some absolute constants; it can be shown that this class of sequences is wider than RBVS class (see \cite{Tik07}). We note that the condition (\ref{1.4}) was introduced in \cite{Tik05} and defines GM class (generale monotone sequences). 
 
Our contribution to this topic is given in the following three theorems. Firstly, for any noninteger $\a>0$  we show that for the uniform convergence of the series (\ref{1.1}) or (\ref{1.2}) on any interval $[a,b]$ with $0<a<b$ one can require something weaker than $c_kk\to0$, $k\to\infty$.

\begin{theorem}\label{th1.1} 
	Let $\{c_k\}$ be a sequence obeying (\ref{1.3}) and (\ref{1.4}). Then 
	
	(a) for any $\a>0$, $\a\notin \N$, there exists $c(\a)\in(0,1)$ such that the condition $\sum_kc_kk^{-c(\a)}<\infty$ is sufficient for the uniform convergence of the series \eqref{1.1} and \eqref{1.2} on $[a,b]$ for any $0<a<b$. Moreover, one can take $c(\a)=\a$ for $\a\in(0,1)$.  	
	
	(b) for any $\a\in(0,1)$ the condition $c_kk^{1-\a}\to0$, $k\to\infty$, is necessary for the convergence of the series \eqref{1.1} or \eqref{1.2} at any point $x\neq0$.  	
\end{theorem}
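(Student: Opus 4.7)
Both parts hinge on estimates for $S_n(x)=\sum_{k=1}^n\sin(k^\alpha x)$ (and analogously $C_n$): part (a) feeds them into Abel summation, while part (b) extracts information from a single carefully chosen block. The key ingredient is a uniform bound $|S_n(x)|\leq C_{a,b}\,n^{1-c(\alpha)}$ for $x\in[a,b]$, with $c(\alpha)\in(0,1)$. For $\alpha\in(0,1)$ this is sharpest: replacing the sum by $\int_1^n\sin(t^\alpha x)\,dt$ (the Euler--Maclaurin error is harmless since $t^\alpha x$ has bounded derivative when $\alpha<1$), substituting $u=t^\alpha x$ and integrating by parts once yields $|S_n(x)|\leq C|x|^{-1}n^{1-\alpha}$, so $c(\alpha)=\alpha$. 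For $\alpha>1$, $\alpha\notin\N$, I would invoke the van der Corput $q$-th derivative test with $q=\lceil\alpha\rceil$, since the $q$-th derivative of $t^\alpha x$ is monotone and of size $\asymp n^{\alpha-q}|x|$ on $[1,n]$.

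For part (a), write
\[
\sum_{k=M}^{N}c_k\sin(k^\alpha x)=c_NS_N-c_MS_{M-1}+\sum_{k=M}^{N-1}(c_k-c_{k+1})S_k
\]
and decompose dyadically. On $[2^j,2^{j+1})$, condition (\ref{1.4}) gives $\sum|c_k-c_{k+1}|\leq Bc_{2^j}$, so combined with $|S_k|\ll 2^{j(1-c(\alpha))}$ the contribution is $\ll c_{2^j}2^{j(1-c(\alpha))}$; by (\ref{1.3}) this is equivalent (up to constants) to $\sum_{k\in[2^j,2^{j+1})}c_kk^{-c(\alpha)}$. Hence $\sum_k c_kk^{-c(\alpha)}<\infty$ forces the tail of the Abel expansion to vanish uniformly on $[a,b]$. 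The cosine case is identical.

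For part (b), suppose for contradiction that $c_{k_j}k_j^{1-\alpha}\geq\delta>0$ along $k_j\to\infty$. Applying (\ref{1.3}) with $l=m\leq k_j=k$ gives $c_m\geq c_{k_j}/A$ for every $m\in[k_j/2,k_j]$. Because $\alpha\in(0,1)$, the increment $\alpha m^{\alpha-1}|x|$ of $M^\alpha x$ tends to zero, so an intermediate-value argument finds $M$ in a window of length $H\asymp k_j^{1-\alpha}/|x|$ starting at $k_j/2$ with $M^\alpha x\equiv\pi/2\pmod{2\pi}$ up to $o(1)$. Setting $L=\fl{c_0M^{1-\alpha}/|x|}$ with a constant $c_0$ so small that $\alpha c_0<\pi/3$, the Taylor expansion $k^\alpha x=M^\alpha x+\alpha(k-M)M^{\alpha-1}x+O(k_j^{-\alpha})$ keeps $\sin(k^\alpha x)\geq 1/2$ for all $k\in[M,M+L]\subset[k_j/2,k_j]$. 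Therefore
\[
\sum_{k=M}^{M+L}c_k\sin(k^\alpha x)\geq\frac{L+1}{2}\cdot\frac{c_{k_j}}{A}\gg \frac{c_0\,\delta}{A|x|},
\]
a fixed positive constant independent of $j$, which contradicts the Cauchy criterion. The cosine case uses the target $M^\alpha x\equiv 0\pmod{2\pi}$ and is otherwise identical. The main obstacle I expect is obtaining the sharp exponent $c(\alpha)=\alpha$ in (a) for $\alpha\in(0,1)$, which requires the explicit integration-by-parts identity rather than a crude van der Corput bound; the intermediate-value argument in (b) works cleanly only because $\alpha<1$ forces the step $\alpha m^{\alpha-1}|x|$ to tend to zero.
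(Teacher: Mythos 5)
Your argument is correct and its architecture coincides with the paper's: part (a) rests on exponential-sum bounds of the form $|S_n(x)|\ll_{\a,a,b} n^{1-c(\a)}$ fed into Abel summation (your dyadic use of (\ref{1.3})--(\ref{1.4}) playing the role of the paper's Lemma \ref{lem2.1}), and part (b) exhibits a long block on which $\sin(k^{\a}x)$ is bounded below, contradicting the Cauchy criterion. Two ingredients differ. For $\a>1$ you invoke the classical van der Corput $q$-th derivative test with $q=\lceil\a\rceil$, while the paper uses Heath-Brown's $r$-th derivative estimate (Lemma \ref{lem2.3}) with $r=\lfloor\a\rfloor+2$; both yield some $c(\a)\in(0,1)$, which is all the theorem asserts, but Heath-Brown gives $c(\a)\asymp\a^{-2}$ for large $\a$ (Corollary \ref{cor2.4}) whereas the classical test only gives an exponentially small exponent of order $2^{-\a}$. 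In part (b) you locate the starting point $M$ of the block by an intermediate-value argument exploiting that the phase increments $\a m^{\a-1}|x|$ tend to zero; the paper instead cites equidistribution of $\{n^{\a}x/(2\pi)\}$ modulo $1$ --- your route is more elementary and equally valid. Two small points to tidy: in (a), the comparison $c_{2^j}2^{j(1-c(\a))}\ll\sum_kc_kk^{-c(\a)}$ should be made against the \emph{preceding} dyadic block $[2^{j-1},2^j)$, since (\ref{1.3}) only bounds later coefficients by earlier ones; and the boundary terms $c_NS_N$, $c_MS_{M-1}$ require $c_kk^{1-c(\a)}\to0$, which follows from $\sum_kc_kk^{-c(\a)}<\infty$ together with (\ref{1.3}) (this is the paper's Lemma \ref{lem2.0}).
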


Note that the condition in the part (a) is clearly weaker than $c_kk=o(1)$. The part (b) means that for $\a\in(0,1)$ the condition $\sum_kc_kk^{-\a}<\infty$ cannot be essentially relaxed: indeed, if for some $\e>0$ the condition $\sum c_kk^{-\a-\e}<\infty$ were sufficient, then the series with $c_k=k^{\a-1+\e/2}$ would converge, which is impossible because of (b). We did not try to optimize the value $c(\a)$ for $\a>1$, and just mention that one can take $c(\a)$ of order $\a^{-2}$ for large $\a$ (see Corollary \ref{cor2.4}).

Secondly, we generalize the mentioned result from \cite{Og} for any noninteger $\a>0$.

\begin{theorem}\label{th1.2} 
Let $\a>0$ be not an integer, and let $\{c_k\}$ be a sequence obeying (\ref{1.3}) and (\ref{1.4}). Then the series \eqref{1.1} converges uniformly on any bounded subset of $\R$ if and only if $c_kk\to0$ as $k\to\infty$.
\end{theorem}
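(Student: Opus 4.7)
\emph{Necessity.} Suppose $c_k k\not\to 0$, so that $c_{N_j}N_j\ge \eta>0$ along some subsequence $N_j\to\infty$. I will pick $\delta=\delta(\alpha)\in(0,1)$ small enough that $(1-\delta)^{\alpha}\ge 2/3$, and test the uniform Cauchy criterion at $x_j:=\pi/(2N_j^{\alpha})\to 0^+$. For $k\in[N_j(1-\delta),N_j]$ one has $k^{\alpha}x_j=(k/N_j)^{\alpha}\cdot \pi/2\in[\pi/3,\pi/2]$, so $\sin(k^{\alpha}x_j)\ge\sqrt{3}/2$. Condition (\ref{1.3}) gives $c_k\ge c_{N_j}/A$ on this block, so its sum is bounded below by the positive constant $\sqrt{3}\,\delta\,\eta/(2A)$, contradicting uniform convergence on any neighborhood of $0$. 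Note that this direction does not use the non-integer hypothesis; the restriction $\alpha\notin\N$ will only enter in sufficiency.

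\emph{Sufficiency.} Assume $c_k k\to 0$, so in particular $c_k=o(1/k)$ and $\sum_k c_k k^{-c(\alpha)}<\infty$ for the constant $c(\alpha)\in(0,1)$ from Theorem~\ref{th1.1}(a). That theorem already delivers uniform convergence on every $[a,T]$ with $0<a<T$, so what remains is to control a tail $\Sigma:=\sum_{M_1\le k\le M_2}c_k\sin(k^{\alpha}x)$ uniformly for $x$ in a neighborhood of $0$. Set $\varepsilon_M:=\sup_{k\ge M}(c_k k)$, so $\varepsilon_M\to 0$, and $N_x:=\lfloor x^{-1/\alpha}\rfloor$, and split $\Sigma$ at $N_x$. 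On the short range $k\le N_x$ the estimate $|\sin(k^{\alpha}x)|\le k^{\alpha}x$ combined with $c_k\le\varepsilon_{M_1}/k$ yields $|\Sigma_{\mathrm{short}}|\le\varepsilon_{M_1}x\sum_{k\le N_x}k^{\alpha-1}\le C_{\alpha}\varepsilon_{M_1}xN_x^{\alpha}=O(\varepsilon_{M_1})$, uniformly in $x$.

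\emph{Main obstacle.} For the long range $k>N_x$ I will apply Abel summation in dyadic blocks $[K,2K]$ with $K\ge\max(N_x,M_1)$; conditions (\ref{1.3})--(\ref{1.4}) collapse each block's contribution to $(A+B)c_K\cdot\max_{J\subseteq[K,2K]}\bigl|\sum_{k\in J}\sin(k^{\alpha}x)\bigr|$. The crux is then the exponential sum estimate on $\sum_{k\in J}e^{ik^{\alpha}x}$, and it is precisely here that $\alpha\notin\N$ becomes essential: for integer $\alpha$ one encounters Gauss-sum type resonances at rational multiples of $\pi$ (for $\alpha=2$ this even forces $\sum c_k<\infty$ on $[0,T]$). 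For $\alpha\in(0,2)\setminus\{1\}$, van der Corput's second derivative test applied to $f(t)=t^{\alpha}x$, whose second derivative $\alpha(\alpha-1)k^{\alpha-2}x$ is nonvanishing, gives the bound $C_{\alpha}(K^{\alpha/2}x^{1/2}+K^{1-\alpha/2}x^{-1/2})$; writing $K=2^jN_x$, its contribution to $\Sigma_{\mathrm{long}}$ is $C_{\alpha}\varepsilon_{M_1}(2^{j(\alpha/2-1)}/N_x+2^{-j\alpha/2})$, a convergent geometric series of total size $O(\varepsilon_{M_1})$; the boundary block $j=0$ (i.e.\ $K\asymp N_x$) where van der Corput gives no saving is absorbed by the trivial estimate $|\sum\sin|\le K$ weighted by $c_K\le\varepsilon_{M_1}/K$. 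For $\alpha>2$ I will use higher-order van der Corput tests with the first nonvanishing derivative $f^{(r)}(k)=\alpha(\alpha-1)\cdots(\alpha-r+1)k^{\alpha-r}x$ at order $r=\lceil\alpha\rceil$ (nonvanishing precisely because $\alpha\notin\N$). I expect the delicate points to be stitching together the exponents $\sigma(\alpha)>0$ provided by different van der Corput orders as $\alpha$ varies, and handling the transition block $K\asymp N_x$ cleanly in the two regimes $N_x\gtrless M_1$.
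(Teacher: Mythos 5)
Your necessity argument is correct and coincides with the paper's (the paper tests the Cauchy criterion at $x=\pi l^{-\alpha}/4$ on the block $[l,\lfloor 2^{1/\alpha}l\rfloor]$, which is the same device), and your sufficiency argument is genuinely complete for $\alpha\in(0,2)\setminus\{1\}$: the short-range bound $|\sin(k^{\alpha}x)|\le k^{\alpha}x$ for $k\le N_x$ and the Abel-summation reduction of each block to $c_K\max_J|\sum_{k\in J}e(k^{\alpha}x)|$ are exactly what the paper does, and the second-derivative test does close the long range in that case. The problem is the case $\alpha>2$, which is precisely the new content of this theorem (the range $\alpha\in(0,2)$ is already in \cite{Og}), and there your plan has a genuine gap which you flag but do not resolve.

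Concretely: every derivative test degenerates in the range $x^{-1/\alpha}\lesssim K\lesssim x^{-1/(\alpha-1)}$. For the classical $r$-th derivative test with $\lambda_r\asymp K^{\alpha-r}x$, the term $K^{1-2^{2-r}}\lambda_r^{-1/(2^r-2)}$ at $K\asymp N_x=x^{-1/\alpha}$ (where $\lambda_r\asymp N_x^{-r}$) equals $N_x^{\,1-2^{2-r}+r/(2^r-2)}$, and $1-2^{2-r}+r/(2^r-2)\ge 1$ for every $r\ge3$ (with strict inequality for $r\ge4$); likewise, in the Weyl-type bound of Lemma~\ref{lem2.8} one has $F^{-1}\asymp x^{-1}K^{-\alpha}\asymp1$ there, so no saving. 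Thus summing your geometric series from $j=0$ either gives nothing or picks up a positive power of $N_x$, and choosing a different order $r$ does not help at $K\asymp N_x$. The paper closes exactly this window by a different mechanism: for $L_0\le k\le L_1\asymp x^{-1/(\alpha-1)}$ one has $|f'(y)|=\alpha y^{\alpha-1}x/(2\pi)\le1/2$, so the sum equals $\int\sin(y^{\alpha}x)\,dy+O(1)$ (Lemma~\ref{lem2.5}), and an integration-by-parts estimate of the oscillatory integral gives the uniform partial-sum bound $\overline{S}_k\ll x^{-1/\alpha}$ (Lemma~\ref{lem2.7}); Abel summation with $\sum_{k\ge l}|c_k-c_{k+1}|\ll c_l\ll b_l/l$ and $l>L_0$ then absorbs the factor $x^{-1/\alpha}$. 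Only for $K>L_1$, i.e.\ $x\gg K^{-(\alpha-1)}$, is an exponential-sum bound invoked, namely the Weyl estimate of Lemmas~\ref{lem2.8}--\ref{lem2.9}, whose power saving $K^{1-d(\alpha,\delta)}$ is uniform precisely because $K\ge x^{-1/(\alpha-\delta)}$ holds there. You would need to supply these two ingredients (or equivalents) to complete the proof for $\alpha>2$.
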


Combining this with the results of \cite{Og}, we obtain criteria for the uniform convergence of \eqref{1.1} on the bounded subsets of the real line for any $\a>0$.

Finally, for rational noninteger $\a>0$ and nonincreasing sequences $\{c_k\}$ we find the criterion for the uniform convergence of the series (\ref{1.1}) on the whole real line.

\begin{theorem}\label{th1.3} 
Let $\a>0$, $\a\in\Q\setminus\N$, and let $\{c_k\}$ be a nonincreasing sequence of positive numbers. Then the series (\ref{1.1}) converges uniformly on $\R$ if and only if $\sum_kc_k < \infty$.  	
\end{theorem}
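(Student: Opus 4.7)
Sufficiency is the Weierstrass $M$-test: $|c_k\sin(k^\alpha x)|\le c_k$, so $\sum c_k<\infty$ gives absolute uniform convergence on $\R$.

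For necessity I argue by contrapositive. Assuming $\sum_k c_k=\infty$, I will construct $\eps_0>0$ together with sequences $M_n<N_n$ with $M_n\to\infty$ and $x_n\in\R$ such that
$$\left|\sum_{k=M_n}^{N_n}c_k\sin(k^\alpha x_n)\right|\ge \eps_0,$$
which is incompatible with uniform convergence on $\R$ via the Cauchy criterion.

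Write $\alpha=p/q$ in lowest terms with $q\ge 2$. The core idea is to exploit the near-periodic structure of $k^{p/q}$ for $k$ close to a $q$-th power. For $k=m^q+r$ with $0\le r<(m+1)^q-m^q\asymp qm^{q-1}$ and $m$ large, the Taylor expansion
$$k^{p/q}=m^p+\tfrac{p}{q}\,r\,m^{p-q}+O\!\left(r^2m^{p-2q}\right)$$
shows that, up to a controlled error, the exponent $k^{p/q}$ is an integer $m^p$ shifted by a phase linear in $r$. Choosing $x_n=2\pi L_n$ with $L_n\in\N$ kills the $2\pi m^p L_n$ part, leaving the block contribution of $\sin(k^\alpha x_n)$ essentially equal to $\sin\!\left(\tfrac{2\pi p}{q}rm^{p-q}L_n\right)$; the residual phase depends on $m$ only through the slowly varying quantity $L_n m^{p-q}$.

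I then pick $L_n$ so that, over a window of $m\in[AL_n^{1/(q-p)},BL_n^{1/(q-p)}]$, the residual phase is of order one, and each block of length $qm^{q-1}$ sweeps through a bounded number of full periods of a fixed periodic kernel of mean zero. Summation by parts inside a block (using the monotonicity of $\{c_k\}$) kills the leading term $\sum_{j=0}^{q-1}\sin(2\pi pj/q)=0$ and leaves a contribution proportional to a telescoping difference of the $c_k$, with a nonzero trigonometric proportionality constant depending only on $p,q$. Telescoping across the $m$-window and collecting the blocks, the main term becomes comparable to a partial sum $\sum_{k=M_n}^{N_n}c_k$ of the original sequence. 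Because $\sum c_k=\infty$, for any prescribed $\eps_0$ I can choose the window wide enough (and hence $N_n$ large enough relative to $M_n$) so that this main term exceeds $\eps_0$, while the Taylor errors, summed against $c_k$ over all blocks, are controlled by $\sum_m m^{q-2}c_{m^q}\lesssim\sum_k c_k/k^2$, which is uniformly finite.

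The main obstacles are (i) carrying out the Taylor-error bookkeeping cleanly across the whole window, and (ii) isolated arithmetic degeneracies in the trigonometric constants: most notably $\alpha=1/2$, where the cotangent factor produced by the sum-by-parts step vanishes. In that case I would replace $x_n=2\pi L_n$ by $x_n=\pi L_n$, so that $\sin(m^p\pi L_n)\in\{0,\pm1\}$; after prescribing the parities of $mL_n$ one reduces to a signed sum whose modulus is bounded below by $\sum_{k\in S}c_k$ along a positive-density subsequence $S$ of $\N$, which diverges together with $\sum c_k$ by the monotonicity of $\{c_k\}$. The case $\alpha>1$ (i.e., $p>q$) is handled by the same template, with the scales of $m$ and $L_n$ readjusted so that $r^2m^{p-2q}$ remains uniformly small inside the relevant window.
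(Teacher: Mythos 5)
Your sufficiency argument (the Weierstrass $M$-test) is exactly the paper's. The necessity argument, however, has a fatal structural gap. The quasi-periodic model you build by writing $k=m^q+r$ and Taylor-expanding is, over each period, \emph{mean zero}: since $\gcd(p,q)=1$ and $q\ge2$, one has $\sum_{j=0}^{q-1}\sin(2\pi pj/q+\phi)=\mathrm{Im}\bigl(e^{i\phi}\sum_{j=0}^{q-1}e^{2\pi ipj/q}\bigr)=0$ for \emph{every} phase $\phi$, so the partial sums of your kernel are uniformly bounded. Summation by parts against a nonincreasing $\{c_k\}$ then bounds the entire contribution by $O\bigl(\sum_k|c_k-c_{k+1}|\bigr)=O(c_{M_n})\to0$: the ``telescoping difference of the $c_k$'' you produce is $c_{M_n}-c_{N_n}$, not $\sum_{k=M_n}^{N_n}c_k$. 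A mean-zero quasi-periodic sign pattern can never yield a lower bound comparable to $\sum_{k=M_n}^{N_n}c_k$; for that you need $\sin(k^\alpha x)$ to have a strictly positive \emph{average} over $k\le N$, which is a global arithmetic phenomenon, not a local one. There are further problems even granting the mechanism: your window forces $N_n/M_n$ bounded, and divergence of $\sum_kc_k$ does not make $\sum_{k=M}^{CM}c_k$ bounded below for bounded $C$ (take $c_k=1/(k\log k)$); moreover the quadratic Taylor term $L_nr^2m^{p-2q}$ reaches size $\asymp m^{q-2}$ at the end of a block of length $\asymp qm^{q-1}$ when $L_n\asymp m^{q-p}$, so the phase approximation fails across the block for $q\ge3$ and is only borderline for $q=2$; and the case $\alpha>1$ is not actually treated.

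The paper's route supplies exactly the missing positivity. By Besicovitch's theorem the numbers $n^\alpha$ for square-free $n\le L$ are linearly independent over $\Q$; combined with the transcendence of $\pi$ and Kronecker's theorem (Lemma \ref{lem5.2}) this produces an $x_0=x_0(L)$ with $n^\alpha x_0$ within $\pi/50$ of $\pi/2$ modulo $2\pi$ \emph{simultaneously} for all square-free $n\le L$, hence $\sin(n^\alpha x_0)\ge0.99$ on a set of density $6/\pi^2>0.6$. Even against the worst case $-1$ on the complement this gives $S_k(x_0)=\sum_{n\le k}\sin(n^\alpha x_0)>0.1k$, and Abel summation converts this into $\sum_{k=l}^{L}c_k\sin(k^\alpha x_0)\ge0.1\sum_{k=l+1}^{L}c_k-0.9c_ll$, which is unbounded in $L$ when $\sum_kc_k=\infty$, contradicting the uniform Cauchy criterion. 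To salvage your approach you would need an arithmetic input of this kind forcing a positive bias of the signs; the local expansion around $q$-th powers cannot provide it.
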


Recall that the same for even $\a>0$ was shown in \cite{Og}.

\smallskip 

In Section \ref{sec2} we provide some auxiliary results. Sections \ref{sec3}, \ref{sec4}, \ref{sec5} are devoted to the proofs of Theorems \ref{th1.1}, \ref{th1.2}, \ref{th1.3} respectively. Theorem \ref{th1.1} follows from estimates for the exponential sums $\sum_{M\leq n <2M}e^{2\pi in^{\a}x}$; the proof of Theorem \ref{th1.2} relies on it as well, but also needs bounds for the sums $\sum_{M\leq n <2M}\sin{n^{\a}x}$, where $x$ is small with respect to $M$. The proof of Theorem \ref{th1.3} works for any $\a>0$ such that the square-free\footnote{A positive integer $n$ is said to be square-free if it has the form $n=p_1\ldots p_s$, where $p_j$ are distinct primes.} numbers (or at least a large part of them) to the degree $\a$ are linearly independent over $\Q$. It is natural to conjecture that it is so for any noninteger $\a$, but this seems to be known only for noninteger rational ones. We mention that the analogues of Theorems \ref{th1.2} and \ref{th1.3} for the series (\ref{1.2}) are trivial, since the convergence of this series at the point $x=0$  implies that $\sum_kc_k<\infty$.

\bigskip 

\textbf{Notation.} We use Vinogradov's $\ll$ notation: $F\ll G$ (or $G\gg F$) means that there exists a constant $C>0$ such that $|F|\leq CG$. In many cases, this constant is allowed to depend on some parameters; say, the notation $F\ll_{\a,a,b}G$ means that there is a number $C=C(\a,a,b)$ such that $|F|\leq C(\a,a,b)G$. We write $F\asymp G$ if $G\ll F\ll G$. We denote by $\lfloor u\rfloor$ the largest integer not exceeding $u$.

\bigskip 

\textbf{Acknowledgements.} The author would like to thank Sergei Konyagin for introducing him to this topic and Sergey Tikhonov and the anonymous referee for useful remarks. This work was performed at the Steklov International Mathematical Centre and supported by the Ministry of Science and Higher Education of the Russian Federation (agreement no. 075-15-2019-1614).

\section{Auxiliary results}\label{sec2}

In this section we provide several technical results which we will use later. We begin with the following simple lemma.

\begin{lem}\label{lem2.0}
Let $\{u_k\}$ and $\{c_k\}$ be sequences obeying (\ref{1.3}). Then  $\sum_kc_ku_k<\infty$ implies $c_kku_k\to0$ as $k\to\infty$.
\end{lem}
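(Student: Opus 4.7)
The plan is the standard Cauchy-condensation-style trick: bound the single term $k c_k u_k$ by the sum of $c_j u_j$ over a dyadic block $[\lceil k/2\rceil, k]$, and conclude that this tail-block sum tends to zero by convergence of the full series.

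More concretely, I would first observe that (1.3) yields lower bounds on the terms at indices $j\leq k$: applying (1.3) with $l=j$ and this same index $k$ gives $c_k \leq A c_j$ and $u_k \leq A' u_j$ for $j\leq k$ (where $A,A'$ are the constants from (1.3) for the two sequences). Hence
\[
\sum_{j=\lceil k/2\rceil}^{k} c_j u_j \;\geq\; \sum_{j=\lceil k/2\rceil}^{k} \frac{c_k u_k}{AA'} \;\geq\; \frac{k}{2AA'}\,c_k u_k.
\]
Rearranging gives $k c_k u_k \leq 2AA' \sum_{j\geq \lceil k/2\rceil} c_j u_j$.

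Since $\{c_k\}$ and $\{u_k\}$ are positive sequences and $\sum_k c_k u_k<\infty$, the tail $\sum_{j\geq \lceil k/2\rceil} c_j u_j \to 0$ as $k\to\infty$, which immediately yields $c_k k u_k\to 0$.

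There is no real obstacle: the only subtlety is noticing that (1.3), although phrased as an upper bound $c_k\leq Ac_l$ for $k\geq l$, is equivalent (after swapping the roles of the indices) to a lower bound $c_j\geq c_k/A$ for $j\leq k$, which is exactly what is needed to compare a single term to the block sum immediately preceding it. The same applies to $\{u_k\}$, and the rest is a one-line dyadic estimate.
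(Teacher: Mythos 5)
Your proof is correct and is essentially the same argument as the paper's: the author likewise bounds $c_kku_k \ll \sum_{k/2<n\leq k}c_nu_n$ using (\ref{1.3}) for both sequences and concludes via the Cauchy criterion that this block sum tends to zero.
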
	

\begin{proof} By the assumption, for any $\e>0$ there exists $k_0$ such that $\sum_{k/2<n\leq k}c_nu_n<\e$ for all $k>k_0$. But $\sum_{k/2 < n \leq k}c_nu_n \gg c_kku_k$, and the claim follows.
\end{proof}

\begin{lem}\label{lem2.1}
	Let $\{c_k\}$ be a sequence obeying (\ref{1.3}) and (\ref{1.4}). Then for any $\g$ and numbers $L>l$ we have 
$$\sum_{k=l}^L|c_k-c_{k+1}|k^{\g} \ll_{\g, A,B} c_ll^{\g} + \sum_{k=l}^Lc_kk^{\g-1}. 	
$$	
\end{lem}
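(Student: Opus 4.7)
The natural approach is a dyadic decomposition of the summation range $[l, L]$. Write $[l, L] \subseteq \bigcup_{j=0}^{J-1} [2^j l, 2^{j+1} l)$ where $J$ is the smallest integer with $2^J l > L$. On each dyadic block $[M, 2M)$ with $M = 2^j l$, the factor $k^\gamma$ is essentially constant: $k^\gamma \ll_\gamma M^\gamma$ (with constant $2^{|\gamma|}$). Pulling this out and applying (\ref{1.4}) gives, for every such block,
\[
\sum_{k=M}^{\min(2M-1,L)} |c_k - c_{k+1}| k^\gamma \ll_\gamma M^\gamma \sum_{k=M}^{2M-1}|c_k - c_{k+1}| \leq B\, M^\gamma c_M.
\]

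The $j=0$ block contributes $\ll_{\gamma, B} c_l l^\gamma$, which accounts for the first term on the right-hand side. For $j \geq 1$ the task is to absorb the quantity $M^\gamma c_M$ (with $M = 2^j l$) into a piece of the sum $\sum_{k=l}^L c_k k^{\gamma-1}$. The natural candidate is the previous dyadic block $[M/2, M)$: for $k$ in this range we have $k \asymp M$, hence $k^{\gamma-1} \gg_\gamma M^{\gamma-1}$, and by (\ref{1.3}) applied twice ($c_M \leq A c_{M-1} \leq A^2 c_k$) we have $c_k \gg_A c_M$. Therefore
\[
\sum_{k=M/2}^{M-1} c_k k^{\gamma-1} \gg_{\gamma, A} \frac{M}{2} \cdot M^{\gamma-1} c_M \gg_{\gamma, A} M^\gamma c_M.
\]

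The blocks $[2^{j-1} l, 2^j l)$ for $j = 1, 2, \ldots, J-1$ are disjoint and contained in $[l, L]$, so summing the bound over $j \geq 1$ and combining with the $j=0$ contribution yields
\[
\sum_{k=l}^L |c_k - c_{k+1}| k^\gamma \ll_{\gamma, A, B} c_l l^\gamma + \sum_{k=l}^L c_k k^{\gamma-1},
\]
as desired. The only subtle point is the truncation of the final dyadic block by $L$, but this only shrinks the sum of $|c_k-c_{k+1}|k^\gamma$ on that block, so the same estimate applies; the sign of $\gamma$ affects only the $\gamma$-dependent constants that compare $k^\gamma$ (resp.\ $k^{\gamma-1}$) with $M^\gamma$ (resp.\ $M^{\gamma-1}$) on a dyadic block, and these are harmless. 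No real obstacle is anticipated --- the argument is an entirely standard dyadic reduction exploiting the two hypotheses (\ref{1.3}) and (\ref{1.4}) in a symmetric way.
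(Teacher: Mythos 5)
Your proof is correct and follows essentially the same strategy as the paper's: decompose $[l,L]$ into dyadic blocks, use (\ref{1.4}) to bound each block's contribution by $\ll_\gamma M^\gamma c_M$, and use (\ref{1.3}) to absorb this into the portion of $\sum c_k k^{\gamma-1}$ coming from the preceding block. The only (immaterial) difference is that the paper anchors its blocks at powers of $2$ and treats the short-range case $L\leq 4l-1$ separately, whereas you anchor the blocks at $l$.
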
	
	 
\begin{proof} If $L\leq 4l-1$, then by the assumptions on $\{c_k\}$ we have
$$	
\sum_{k=l}^L|c_k-c_{k+1}|k^{\g} \ll l^{\g}(c_l+c_{2l}) \ll c_ll^{\g}	
$$	
(the constants in the proof of this lemma are allowed to depend on $\g,A,B$), and we are done. Thus we may assume that $2^u\leq l<2^{u+1}\leq 2^{v-1}\leq L < 2^v$, where $v\geq u+3$. For any positive integer $s$ we have
$$
\sum_{k=2^s}^{2^{s+1}-1}|c_k-c_{k+1}|k^{\g} \ll c_{2^s}2^{s\g} \ll c_{2^s}\sum_{k=2^{s-1}}^{2^s-1}k^{\g-1} \ll \sum_{k=2^{s-1}}^{2^s-1}c_kk^{\g-1}.
$$
Using this inequality, we get
\begin{multline*}
	\sum_{k=l}^L|c_k-c_{k+1}|k^{\g} \leq \sum_{k=l}^{2^{u+2}-1}|c_k-c_{k+1}|k^{\g}+\sum_{s=u+2}^{v-1}\sum_{k=2^s}^{2^{s+1}-1}|c_k-c_{k+1}|k^{\g} \\
	\ll c_ll^{\g} + \sum_{k=2^{u+1}}^{2^{v-1}-1}c_kk^{\g-1} \ll c_ll^{\g}+\sum_{k=l}^Lc_kk^{\g-1},
\end{multline*}
as desired.
\end{proof}

We will need the following estimates for exponential sums due to
van der Corput and Heath-Brown. We use the standard notation $e(y)=e^{2\pi iy}$.

\begin{lem}\label{lem2.2}
	Let $f\colon[u,v]\to\R$ be a function with $\theta\leq|f'(y)|\leq 1-\theta$ and $f''(y)\neq0$ for $y\in[u,v]$. Then
	$$\sum_{u\leq n<v}e(f(n)) \ll \theta^{-1}.
	$$
\end{lem}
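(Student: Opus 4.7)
The plan is to derive this as a form of the Kusmin--Landau inequality. Since $f''$ does not vanish on $[u,v]$, the derivative $f'$ is strictly monotone, and the bound $\theta\le|f'|\le 1-\theta$ keeps $f'$ away from $0$; hence $f'(y)\in[\theta,1-\theta]$ or $f'(y)\in[-(1-\theta),-\theta]$ throughout, and after replacing $f$ by $-f$ if necessary (which does not affect the absolute value of the sum) I may assume the former. By the mean value theorem the consecutive differences $\Delta_n:=f(n+1)-f(n)$ then also lie in $[\theta,1-\theta]$ and form a monotone sequence in~$n$.

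The next step is to use the standard identity $e(f(n))=h_n\bigl(e(f(n+1))-e(f(n))\bigr)$, where $h_n:=(e(\Delta_n)-1)^{-1}$, and to apply Abel's summation to obtain
$$\sum_{u\le n<v}e(f(n))=h_{v-1}e(f(v))-h_u e(f(u))+\sum_{n=u+1}^{v-1}(h_{n-1}-h_n)\,e(f(n)).$$
Since $|e(t)-1|=2|\sin\pi t|\gg\theta$ for $t\in[\theta,1-\theta]$, the boundary terms contribute $\ll\theta^{-1}$, and the task reduces to proving $\sum_n|h_{n-1}-h_n|\ll\theta^{-1}$ as well.

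The main obstacle is precisely this variation bound. A naive estimate $|h_{n-1}-h_n|\ll|\Delta_{n-1}-\Delta_n|\,\theta^{-2}$, combined with $\sum_n|\Delta_{n-1}-\Delta_n|\le 1-2\theta$ by telescoping of the monotone sequence $\Delta_n$, only yields $O(\theta^{-2})$ and loses a factor of $\theta^{-1}$. The sharp rate can be recovered by working at the level of $g(t):=(e(t)-1)^{-1}$ and exploiting the monotonicity of $\{\Delta_n\}$ directly: the intervals $[\Delta_{n-1},\Delta_n]$ are pairwise disjoint (up to endpoints) and all lie in $[\theta,1-\theta]$, so
$$\sum_n|h_{n-1}-h_n|=\sum_n\left|\int_{\Delta_{n-1}}^{\Delta_n}g'(t)\,dt\right|\le\int_\theta^{1-\theta}|g'(t)|\,dt=\cot\pi\theta\ll\theta^{-1},$$
where the last equality uses $|g'(t)|=\pi/(2\sin^2\pi t)$. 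Collecting the estimates yields the claimed bound $\bigl|\sum_{u\le n<v}e(f(n))\bigr|\ll\theta^{-1}$.
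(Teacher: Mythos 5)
Your proof is correct: it is the classical Kusmin--Landau argument (reduce to $f'\in[\theta,1-\theta]$ by monotonicity and sign, write $e(f(n))=h_n(e(f(n+1))-e(f(n)))$, sum by parts, and control the variation of $h_n=g(\Delta_n)$ by integrating $|g'|$ over the disjoint intervals $[\Delta_{n-1},\Delta_n]\subseteq[\theta,1-\theta]$, which gives $\cot\pi\theta\ll\theta^{-1}$ rather than the lossy $O(\theta^{-2})$). The paper does not prove this lemma but simply cites Corollary 8.11 of Iwaniec--Kowalski, whose proof is exactly this argument, so your write-up is a correct self-contained version of the same approach.
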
	

\begin{proof}
	See \cite{IK}, Corollary 8.11.
\end{proof}

\begin{lem}\label{lem2.3}
Let $r\geq3$, $M\geq1$, and $f\colon[M,2M]\to\R$ be a function with
$\L\leq f^{(r)}(y)\leq\eta\L$ for some $\Lambda>0$ and $\eta\geq1$. Then for any $\e>0$
$$
\sum_{M\leq n< 2M} e(f(n)) \ll_{\eta,r,\e} M^{1+\e}\left(\L^{1/r(r-1)}+M^{-1/r(r-1)}+ M^{-2/r(r-1)}\L^{-2/r^2(r-1)} \right). 
$$
\end{lem}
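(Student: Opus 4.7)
The bound is the $r$-th derivative test of Heath--Brown obtained via Vinogradov's mean value theorem, so for the present paper the cleanest option is simply to quote it from the relevant reference. If I had to reprove it from scratch, my plan would be as follows.

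I would first partition $[M, 2M)$ into $\asymp M/M_1$ consecutive subintervals of length $M_1 \le M$, where $M_1$ is a parameter to be optimized later. On each subinterval $I$, I would Taylor-expand $f$ about its midpoint to write $f(y) = P(y) + E(y)$ with $\deg P \le r-1$ and $|E(y)| \ll_{\eta,r} \L M_1^r$. Choosing $M_1 \ll \L^{-1/r}$ makes the error $e(E(y)) = 1 + O(\L M_1^r)$ harmless, and the task reduces to estimating polynomial exponential sums $S_I = \sum_{n \in I} e(P(n))$, in which the coefficient of $y^{r-1}$ has size $\asymp \L/r!$.

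Each $S_I$ I would handle by the standard Weyl--Vinogradov machinery: raise to a high $(2s)$-th power so that, via orthogonality, the problem reduces to counting integer solutions of the Vinogradov system
$$\sum_{i=1}^{s} \bigl(n_i^{\,j} - m_i^{\,j}\bigr) = 0, \qquad j = 1, \ldots, r-1,$$
with $n_i, m_i \in I$; by the now proven Vinogradov main conjecture (Bourgain--Demeter--Guth and Wooley), this count is $\ll M_1^{2s - r(r-1)/2 + \e}$ for $s \ge \binom{r}{2}$. The factor $\L^{1/r(r-1)}$ would be extracted from the size of the leading coefficient of $P$ in the top-degree equation of the system. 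Summing the resulting bound for $|S_I|$ over the $M/M_1$ subintervals and optimizing $M_1$ against $\min(M, \L^{-1/r})$ would then produce the three terms in the conclusion: $\L^{1/r(r-1)}$ from the leading-coefficient saving, $M^{-1/r(r-1)}$ from the Weyl step at $M_1 = M$, and the mixed term from the interior optimum.

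The main obstacle is the appeal to Vinogradov's main conjecture with the sharp exponent $r(r-1)/2$, a deep modern result; everything else is routine parameter bookkeeping and absorption of $\eta$-dependent constants into the implicit $\ll_{\eta,r,\e}$.
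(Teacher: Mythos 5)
Your first instinct is exactly what the paper does: Lemma \ref{lem2.3} is quoted directly as Theorem 1 of the Heath--Brown reference \cite{HB}, applied to $f(n+M)$ on $[0,M]$, with no independent proof given. The accompanying sketch of how one would reprove it (subdivision, Taylor expansion, Weyl differencing, and the sharp Vinogradov mean value theorem) is a fair summary of the ingredients behind Heath--Brown's argument, but it is not needed here.
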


\begin{proof}
This is Theorem 1 of \cite{HB} with the function $f(n+M)$ defined on $[0,M]$.
\end{proof}

\begin{cor}\label{cor2.4}
Let $\a>0$ be not an integer. Define the function
$$c(\a)=\begin{cases}
\a, & \mbox{ if } \a\in(0,1);\\
\frac{2}{3(\a+1)(\a+2)}, & \mbox{ if } \a>1.
\end{cases}
$$
Let $0<a'<b'$. Then for all $x\in[a',b']$ and $k\geq1$ we have 
$$\sum_{n=1}^ke(n^{\a}x) \ll_{\a,a',b'} k^{1-c(\a)}. 
$$
\end{cor}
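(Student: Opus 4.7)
The plan is to decompose $\sum_{n=1}^k e(n^\a x)$ dyadically into blocks $S_M := \sum_{M \leq n < 2M} e(n^\a x)$ with $M$ a power of $2$ at most $k$, estimate each $S_M$ via the appropriate van der Corput-type lemma, and sum the resulting geometric series. Throughout, set $f(y) := xy^\a$; since $\a \notin \N$, the derivative $f^{(r)}(y) = x\, \a(\a-1)\cdots(\a-r+1)\, y^{\a-r}$ has constant nonzero sign on $(0,\infty)$ and satisfies $|f^{(r)}(y)| \asymp_{\a,r,a',b'} M^{\a-r}$ uniformly for $y \in [M,2M]$.

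For $\a \in (0,1)$ I would apply Lemma \ref{lem2.2}. For $M \geq M_0(\a, a', b')$ large enough that $\a b' M^{\a-1} < 1/2$, the derivative $|f'(y)| = \a x y^{\a-1}$ lies in a window $[\theta, 2\theta]$ with $\theta \asymp_{\a,a',b'} M^{\a-1}$, and $f''$ has constant sign; Lemma \ref{lem2.2} then gives $|S_M| \ll \theta^{-1} \ll_{\a,a',b'} M^{1-\a}$. The finitely many small blocks ($M < M_0$) together contribute $O_{\a,a',b'}(1)$ by the trivial bound, so summing dyadically,
\[
\left| \sum_{n=1}^k e(n^\a x) \right| \ll_{\a,a',b'} 1 + \sum_{2^j \leq k} 2^{j(1-\a)} \ll k^{1-\a} = k^{1-c(\a)}.
\]

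For $\a > 1$ I would take $r := \lfloor \a \rfloor + 2$, so that $r \geq 3$ and, since $\a \notin \N$, $r \in (\a+1, \a+2)$ (in particular $r-\a > 1$). After replacing $f$ by $-f$ if $f^{(r)}$ is negative (which does not change $|S_M|$), Lemma \ref{lem2.3} applies with $\Lambda \asymp M^{\a-r}$ and $\eta = O_\a(1)$ and yields, for any $\e > 0$,
\[
|S_M| \ll_{\a,a',b',\e} M^{\,1 + \e - \sigma(\a,r)}, \qquad \sigma(\a,r) := \min\!\left(\tfrac{r-\a}{r(r-1)},\; \tfrac{1}{r(r-1)},\; \tfrac{2\a}{r^2(r-1)}\right).
\]
Using $r - \a > 1$ and $r \leq \a+2$, the first two terms in the minimum are each at least $1/((\a+1)(\a+2)) = \tfrac{3}{2} c(\a) > c(\a)$, while the third is at least $2\a/((\a+1)(\a+2)^2)$, which strictly exceeds $c(\a) = 2/(3(\a+1)(\a+2))$ precisely because $3\a/(\a+2) > 1$ for $\a > 1$. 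Hence $\sigma(\a,r) > c(\a)$; choosing $\e = \e(\a) > 0$ strictly below $\sigma(\a,r) - c(\a)$ produces $|S_M| \ll_{\a,a',b'} M^{1-c(\a)}$ for every $M \geq 1$, and summing dyadically gives $k^{1-c(\a)}$.

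The main obstacle is the $\a > 1$ case: one has to pick an integer $r$ so that none of the three terms in the Heath-Brown estimate loses more than the advertised savings $c(\a)$. The choice $r = \lfloor \a \rfloor + 2$ is essentially forced — any smaller $r$ (with $r \leq \a$) kills the first term, while any larger $r$ shrinks the other two — and the binding constraint turns out to be the third Heath-Brown term, which is exactly what pins down the factor $2/3$ and the denominator $(\a+1)(\a+2)$ in the definition of $c(\a)$.
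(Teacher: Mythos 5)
Your proposal is correct and follows essentially the same route as the paper: dyadic decomposition, Lemma \ref{lem2.2} with $\theta\asymp M^{\a-1}$ for $\a\in(0,1)$, and Lemma \ref{lem2.3} applied to $\pm f$ with $r=\lfloor\a\rfloor+2$ for $\a>1$, with the same source of the constant $\frac{2}{3(\a+1)(\a+2)}$. The only (immaterial) difference is that you evaluate the third Heath-Brown term exactly as $M^{-2\a/r^2(r-1)}$, whereas the paper bounds it via the cruder $\L_1\geq M^{-2}$.
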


\begin{proof} Let $\a>0$, $\a\notin\N$, and $0<a'<b'$. Fix $x\in[a',b']$ and define $f(y)=xy^{\a}$.
It is enough to show that for any $M\geq1$
\begin{equation}\label{2.1}
 \sum_{M\leq n<2M} e(f(n)) \ll_{\a,a',b'} M^{1-c(\a)}.	
\end{equation}
We may suppose that $M$	is sufficiently large depending on $\a,a',b'$, since otherwise the result follows by taking the implied constant large enough. We consider two cases.

\smallskip

a) Let $\a\in(0,1)$. Since $f''(y)\neq0$ and $|f'(y)|=\a xy^{\a-1}$, we have $|f'(y)|\leq \a b'(2M)^{\a-1}\leq 1/2$ and $|f'(y)|\geq\theta:= \a a'M^{\a-1}$. Now (\ref{2.1}) with $c(\a)=\a$ follows from Lemma \ref{lem2.2}.
	
\smallskip	
	
b) Let $\a>1$. We apply Lemma \ref{lem2.3} with the function $-f$ and $r=\lfloor \a \rfloor+2 \geq3$; then
$$
-f^{(r)}(y)=-\a(\a-1)\ldots(\a-\lfloor \a \rfloor -1) xy^{\a-\lfloor \a \rfloor -2} > 0;
$$
clearly $-f^{(r)}(y)\asymp_{\a,a',b'} M^{\a-\lfloor \a\rfloor - 2}=:\Lambda_1$. We have $M^{-2} \leq \Lambda_1 \leq M^{-1}$ and so 
$$
\sum_{M\leq n<2M} e(f(n)) \ll_{\a,a',b',\e} M^{1+\e}\left(M^{-1/r(r-1)} + M^{-2/r(r-1)+4/r^2(r-1)}\right).
$$
Note that $\frac{2}{r(r-1)}-\frac{4}{r^2(r-1)}=\frac{2}{r(r-1)}\left(1-\frac2r\right) \geq \frac{2}{3r(r-1)}>\frac{2}{3(\a+1)(\a+2)}=:c(\a)$. Now (\ref{2.1}) follows by taking $\e$ small enough depending on $\a$. 

\smallskip	

This concludes the proof.
\end{proof}

\begin{lem}\label{lem2.5}
Let $f$ be a real function with $|f'(y)|\leq 1-\theta$ and $f''(y)\neq0$ on $[u,v]$. Then
$$\sum_{u < n < v} e(f(n)) = \int_u^v e(f(y))dy + O(\theta^{-1}).
$$
\end{lem}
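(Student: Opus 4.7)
My plan is to reduce the statement to bounding the oscillatory integral $J := \int_u^v \rho(y) f'(y) e(f(y))dy$ by $O(\theta^{-1})$, where $\rho(y) := \{y\} - 1/2$ is the sawtooth. Abel summation (equivalently, integration by parts against $d\lfloor y\rfloor = dy - d\rho(y)$) gives
\[\sum_{u<n<v} e(f(n)) = \int_u^v e(f(y))dy + 2\pi i J + O(1),\]
where the $O(1)$ absorbs the boundary values of $\rho(y) e(f(y))$ at $u$ and $v$ (both bounded since $|\rho|\le 1/2$ and $|e(f)| = 1$).

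To bound $J$ I use the Fourier series $\rho(y) = -\sum_{k \geq 1} \sin(2\pi ky)/(\pi k)$. Its partial sums are uniformly bounded (by the classical estimate for $\sum_{k=1}^{N} \sin(k\alpha)/k$), so by dominated convergence I may interchange the summation with the integral against the bounded function $f'(y)e(f(y))$. Writing the sines as exponentials reduces $J$, up to a factor of $1/k$ per term, to estimating
\[I_{k,\pm} := \int_u^v f'(y)\, e(f(y) \pm ky)\, dy, \qquad k \geq 1.\]
For each such $k$, the phase derivative $f'(y) \pm k$ is monotonic (because $f''$ has constant sign, being continuous and nonvanishing on $[u,v]$) and satisfies $|f'(y)\pm k| \geq k - 1 + \theta$. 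Integrating by parts with antiderivative $e(f(y)\pm ky)/(2\pi i(f'(y)\pm k))$ produces a boundary term of size $\ll 1/(k-1+\theta)$ and a remainder proportional to $\int_u^v kf''(y)/(f'(y)\pm k)^2 \cdot e(f(y)\pm ky)dy$, whose integrand has constant sign, so its absolute value integrates exactly to $k\,\bigl|1/(f'(u)\pm k) - 1/(f'(v)\pm k)\bigr|$.

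The delicate case is $k = 1$, where both the boundary and the remainder contributions are of size $O(\theta^{-1})$; the point is that one must evaluate the remainder via the telescoping identity above rather than by the crude product bound $\int |f''|\cdot \theta^{-2}$, which would cost an extra factor of $\theta^{-1}$. For $k \geq 2$ one instead uses $|f'\pm k|^2 \geq (k/2)^2$ to bound the remainder by $O(1/k)$, and the boundary is also $O(1/k)$. Thus $\sum_{k \geq 2} k^{-1}|I_{k,\pm}|$ converges absolutely to $O(1)$, while the $k=1$ contribution is $O(\theta^{-1})$, yielding $|J| \ll \theta^{-1}$ and hence the lemma.
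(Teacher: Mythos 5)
Your proof is correct. Note, however, that the paper does not prove this lemma at all: it is quoted verbatim from Iwaniec--Kowalski (Lemma 8.8 there), so there is nothing in the paper to compare against except the citation. What you have written is a sound, self-contained reconstruction of the standard van der Corput argument behind that reference: Euler--Maclaurin against the sawtooth $\rho$, the Fourier expansion of $\rho$ with uniformly bounded partial sums to justify the interchange, and then a first-derivative-test analysis of $I_{k,\pm}$ in which the $k=1$ term is handled by the exact telescoping $\int_u^v k|f''|\,|f'\pm k|^{-2}\,dy = k\,\bigl|\tfrac{1}{f'(u)\pm k}-\tfrac{1}{f'(v)\pm k}\bigr|\ll\theta^{-1}$ rather than the lossy pointwise bound; you correctly identify this as the one delicate point, and your $k\ge 2$ tail sums to $O(1)$. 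Two small remarks: the constant sign of $f''$ follows from Darboux's theorem (a derivative that never vanishes on an interval has constant sign), so you need not assume continuity of $f''$ beyond what the lemma states; and your $O(1/k)$ bound for the $k\ge 2$ remainders implicitly uses $\int_u^v|f''|\,dy=|f'(v)-f'(u)|\le 2$, which again rests on that constant sign. Neither affects the validity of the argument.
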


\begin{proof}
See \cite{IK}, Lemma 8.8.
\end{proof}

\begin{lem}\label{lem2.6} 
Let $\a\in(0,1)$ and $x\in(0,1/10)$. Then for any $k\geq1$ 
$$
\sum_{n=1}^k\sin n^{\a}x \ll_{\a} k^{1-\a}x^{-1}.
$$ 
\end{lem}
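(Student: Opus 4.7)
The plan is to replace the sum by an integral via Lemma~\ref{lem2.5} and then estimate the integral by a change of variables followed by one integration by parts.

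Set $f(y)=y^\a x/(2\pi)$ on $[1,k+1]$, so that $\sin n^\a x=\operatorname{Im} e(f(n))$. Since $\a<1$ and $x<1/10$, the derivative $f'(y)=\a xy^{\a-1}/(2\pi)$ is positive and bounded above by $\a x/(2\pi)<1/(20\pi)$ on this interval, and $f''$ does not vanish. Hence Lemma~\ref{lem2.5} applies with $\theta=1/2$, and, after taking imaginary parts and absorbing the isolated term $\sin x=O(1)$ into the error,
$$\sum_{n=1}^k\sin n^\a x=\int_1^{k+1}\sin(y^\a x)\,dy+O(1).$$
Since $1/x>10$, the $O(1)$ is dominated by $k^{1-\a}/x$, so it suffices to bound the integral by $\ll_\a k^{1-\a}/x$.

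The substitution $u=y^\a x$ recasts the integral as
$$\frac{1}{\a x^{1/\a}}\int_x^{v}u^{1/\a-1}\sin u\,du,\qquad v:=(k+1)^\a x.$$
If $v\leq 1$, the bound $|\sin u|\leq u$ makes the integrand at most $u^{1/\a}$; integration and the outer factor $x^{-1/\a}/\a$ yield $\ll_\a k^{1+\a}x$, and the inequality $k^{1+\a}x\leq k^{1-\a}/x$ is exactly $v\leq 1$. If $v>1$, split at $u=1$. On $[x,1]$ the crude bound $|\sin u|\leq u$ gives a contribution $\ll_\a x^{-1/\a}$, and $v>1$ forces $k\gg x^{-1/\a}$, so this is already $\leq k^{1-\a}/x$ up to a constant. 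On $[1,v]$ a single integration by parts gives
$$\int_1^v u^{1/\a-1}\sin u\,du\ll_\a v^{1/\a-1}$$
(both the boundary term and the emerging integral of $u^{1/\a-2}\cos u$ are $O_\a(v^{1/\a-1})$); multiplying by $x^{-1/\a}/\a$ and using $v^{1/\a-1}=(k+1)^{1-\a}x^{(1-\a)/\a}$ produces $\ll_\a k^{1-\a}/x$, the $x^{-1}$ arising from the cancellation $x^{-1/\a}\cdot x^{(1-\a)/\a}=x^{-1}$.

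The only nonroutine step is the integration by parts in the oscillatory regime; everything else is elementary. The smallness hypothesis $x<1/10$ is used only once, to certify the bound $|f'|<1/2$ required by Lemma~\ref{lem2.5}; the decisive $x^{-1}$ in the conclusion comes entirely from the change of variables.
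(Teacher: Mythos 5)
Your proof is correct and follows essentially the same route as the paper: apply Lemma~\ref{lem2.5} to replace the sum by $\int\sin(y^{\a}x)\,dy$, substitute $u=y^{\a}x$, and bound the resulting oscillatory integral by one integration by parts. The paper compresses your case analysis into the single uniform inequality $\left|\int_0^u t^{a}\sin t\,dt\right|\leq 2u^{a}$ for $a>0$, but the substance is identical.
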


\begin{proof}
Consider the function $f(y)=y^{\a}x/(2\pi)$ for $y\geq1$; then $|f'(y)|\leq \a y^{\a-1}x \leq 1/10$. By the previous lemma we get, taking the imaginary parts,
$$\sum_{n=1}^k \sin n^{\a}x = \int_1^k \sin y^{\a}x \, dy + O(1)= \frac{1}{\a x^{1/\alpha}}\int_x^{k^{\a}x} t^{1/\a-1}\sin t \, dt +O(1).
$$	
Using integration by parts it is easy to see that $\left|\int_0^ut^a\sin t\,dt\right| \leq 2u^a$ for any $a>0$ and $u>0$. Since $1/\a-1>0$, we thus have
$$\sum_{n=1}^k \sin n^{\a}x \ll_{\a} x^{-1/\a}(k^{\a}x)^{1/\a-1} + O(1) \ll k^{1-\a}x^{-1}, 
$$
as desired.
\end{proof}

\begin{lem}\label{lem2.7} 
	Let $\a>1$ and $x\in(0,x_0(\a))$, where $x_0(\a)$ is small enough. Define $L_0=\lfloor x^{-1/\a}\rfloor+1$ and $L_1=\lfloor (2x\a)^{-1/(\a-1)} \rfloor$. Then for any $k$ with $L_0\leq k < L_1$ we have
	$$
	\sum_{n=L_0}^k\sin n^{\a}x \ll_{\a} x^{-1/\a}.
	$$ 
\end{lem}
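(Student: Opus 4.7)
The plan is to mimic the argument used for Lemma \ref{lem2.6}: approximate the sum by an integral via Lemma \ref{lem2.5}, then evaluate the integral through a change of variables and bound the result by integration by parts.

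First I would set $f(y)=y^{\a}x/(2\pi)$, so that $\operatorname{Im} e(f(n))=\sin n^{\a}x$, and compute $f'(y)=\a y^{\a-1}x/(2\pi)$, $f''(y)>0$. The point is that the cutoff $L_1=\lfloor (2x\a)^{-1/(\a-1)}\rfloor$ is defined precisely so that $L_1^{\a-1}\le (2x\a)^{-1}$, which yields $|f'(y)|\le 1/(4\pi)$ on $[L_0,L_1]$. Hence Lemma \ref{lem2.5} applies with, say, $\theta=1/2$, and produces
$$\sum_{n=L_0}^{k}\sin n^{\a}x \;=\;\int_{L_0}^{k}\sin y^{\a}x\,dy \;+\; O(1).$$

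Next I would substitute $t=y^{\a}x$ to rewrite the integral as
$$\frac{1}{\a x^{1/\a}}\int_{L_0^{\a}x}^{k^{\a}x} t^{1/\a-1}\sin t\,dt.$$
The other cutoff, $L_0=\lfloor x^{-1/\a}\rfloor+1$, is chosen so that $L_0^{\a}x\ge 1$, which keeps the integrand bounded at the lower endpoint. Since $\a>1$, the exponent $\beta:=1/\a-1$ lies in $(-1,0)$, so $t^{\beta}$ is decreasing on $[1,\infty)$. A single integration by parts (with $u=t^{\beta}$, $dv=\sin t\,dt$) then bounds both the boundary term and the remainder by $O(a^{\beta})$ uniformly in the upper endpoint, where $a=L_0^{\a}x\ge 1$; in particular, the inner integral is $O(1)$, giving $\int_{L_0}^{k}\sin y^{\a}x\,dy \ll_{\a} x^{-1/\a}$. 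Combining with the $O(1)$ from Lemma \ref{lem2.5} and absorbing it into the larger $x^{-1/\a}$ (which is large by the smallness of $x$) yields the claimed bound.

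The main obstacle is conceptual rather than computational: one must recognize that the two cutoffs $L_0$ and $L_1$ are calibrated exactly so that the van der Corput hypothesis $|f'|\le 1-\theta$ and the tameness $L_0^{\a}x\ge 1$ of the resulting oscillatory integral both hold. Once these are checked, the remainder of the argument parallels the proof of Lemma \ref{lem2.6} almost verbatim, with the one subtlety that the power $t^{1/\a-1}$ now has a \emph{negative} exponent and must be handled by integration by parts rather than a direct estimate.
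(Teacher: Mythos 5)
Your proposal is correct and follows essentially the same route as the paper: apply Lemma \ref{lem2.5} on $[L_0,k]$ (using that $L_1$ forces $|f'|\le 1/2$ there), substitute $t=y^{\a}x$, and bound $\int_u^v t^{1/\a-1}\sin t\,dt\ll u^{1/\a-1}\ll 1$ by integration by parts since $L_0^{\a}x\ge 1$ and the exponent is negative. Your remarks on how the two cutoffs are calibrated match the paper's use of them exactly.
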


\begin{proof}
	Consider the function $f(y)=y^{\a}x/(2\pi)$ for $y\geq1$; then $|f'(y)|\leq \a y^{\a-1}x \leq 1/2$ for any $y\in[L_0,k]$. By Lemma \ref{lem2.5} we get, taking the imaginary parts,
	$$\sum_{n=L_0}^k \sin n^{\a}x = \int_{L_0}^k \sin y^{\a}x \, dy + O(1)= \frac{1}{\a x^{1/\alpha}}\int_{L_0^{\a}x}^{k^{\a}x} t^{1/\a-1}\sin t \, dt +O(1).
	$$	
	Using integration by parts it is easy to see that $\left|\int_u^vt^a\sin t\,dt\right| \ll u^a$ for any $a<0$ and $0<u<v$. Since $1/\a-1<0$, we thus have
	$$\sum_{n=1}^k \sin n^{\a}x \ll_{\a} x^{-1/\a}(L_0^{\a}x)^{1/\a-1} + O(1) \ll x^{-1/\a}, 
	$$
	as desired.
\end{proof}

We will also need another estimate for exponential sums.

\begin{lem}[H.Weyl]\label{lem2.8}
	Let $M\geq2$, $r\geq2$ and let a function $f\colon [M,2M]\to \R$ be such that
	$$\frac{F}{B} \leq \frac{y^r}{r!}|f^{(r)}(y)| \leq F
	$$ 	
	for all $y\in [M,2M]$, where $B\geq 1$ and $F>0$. Then for any $1\leq M'\leq M$
	$$\sum_{M\leq n\leq M+M'} e(f(n)) \ll 
	B^{2^{2-r}}\left( FM^{-r} + F^{-1}\right)^{2^{2-r}r^{-1}} M\log M.
	$$ 
\end{lem}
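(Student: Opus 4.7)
The plan is induction on $r\ge 2$, with van der Corput's second derivative test serving as the base case and Weyl--van der Corput differencing driving the inductive step.

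For $r=2$, the hypothesis $F/B\le y^2|f''(y)|/2\le F$ gives $|f''(y)|\asymp_B F/M^2$ on $[M,2M]$ with $f''$ of constant sign. Van der Corput's second derivative test then yields
\[
\sum_{M\le n\le M+M'} e(f(n)) \ll (F/M^2)^{1/2}M + (F/M^2)^{-1/2},
\]
which matches the claim for $r=2$ (where $2^{2-r}=1$ and $2^{2-r}/r=1/2$) up to the allowed $B\log M$ factor.

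For the inductive step, I would invoke the Weyl--van der Corput inequality
\[
|S|^2\ll \frac{M^2}{H}+\frac{M}{H}\sum_{h=1}^{H-1}|S_h|,\qquad S_h:=\sum_n e\bigl(g_h(n)\bigr),\quad g_h(y):=f(y+h)-f(y),
\]
valid for $1\le H\le M'$, the inner sum ranging over those $n$ with both $n,n+h\in[M,M+M']$. By the mean value theorem, $g_h^{(r-1)}(y)=hf^{(r)}(\xi)$ for some $\xi\in(y,y+h)$, so the hypothesis on $f^{(r)}$ gives $\tfrac{y^{r-1}}{(r-1)!}|g_h^{(r-1)}(y)|\asymp_B hrF/M=:F_h$ on $[M,2M-h]$. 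Feeding this into the inductive hypothesis for $g_h$ with parameters $F_h$ and an absolute constant multiple of $B$, writing $\eta:=2^{3-r}/(r-1)\le 1/2$, using the subadditivity $(x+y)^\eta\le x^\eta+y^\eta$, and summing the two resulting geometric series in $h$, I obtain
\[
\sum_{h=1}^{H-1}|S_h|\ll B^\eta M\log M\bigl[(rF/M^r)^\eta H^{1+\eta} + (M/(rF))^\eta H^{1-\eta}\bigr].
\]
Substituting back, taking square roots, and choosing $H\in[1,M]$ optimally to balance $M/H^{1/2}$ against the remaining two terms delivers the claim: the $B$-exponent is halved at each step (reaching $2^{2-r}$ after $r-2$ iterations from its initial value $1$), while the exponent on $FM^{-r}+F^{-1}$ combines to $2^{2-r}/r$ after the optimization.

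The main obstacle is bookkeeping the exponents through the recursion and confirming that the optimal $H$ lies in the admissible range $[1,M]$. In the extreme regimes $F\ll 1$ or $F\gg M^r$ the balancing choice would fall outside this range; these cases are disposed of by the trivial bound $|S|\le M$, which is easily absorbed into the $M\log M$ prefactor together with the corresponding value of $(FM^{-r}+F^{-1})^{2^{2-r}/r}$. One should also verify that the shift in domain from $[M,2M]$ to $[M,2M-h]$ induced by differencing causes no loss, which is immediate since $h\le H\le M$.
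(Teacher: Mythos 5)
The paper does not actually prove this lemma: it is quoted as Theorem 8.4 of Iwaniec--Kowalski and the ``proof'' is a one-line citation. Your sketch is therefore not an alternative to anything in the paper but a reconstruction of the proof of the cited theorem, and it is the right reconstruction: that theorem is proved precisely by induction on $r$, with the second-derivative test as the base case and the Weyl--van der Corput inequality driving the inductive step, exactly as you describe. Two bookkeeping remarks. First, in your displayed bound for $\sum_h|S_h|$ the prefactor should be $B^{2^{3-r}}$ (the $B$-exponent supplied by the inductive hypothesis at level $r-1$), not $B^{\eta}$ with $\eta=2^{3-r}/(r-1)$; your closing remark that the $B$-exponent is halved at each square root is the correct accounting. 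Second, the ``optimal balancing'' is cleanest if made explicit: after reducing to the nontrivial range $1\le F\le M^r$ (outside which the claimed bound exceeds $M$ and is trivial, as you note), the single choice $H=MF^{-1/r}\in[1,M]$ makes the two terms of the $h$-sum contribute $(FM^{-r})^{2^{3-r}/r}M^2\log M$ and $F^{-2^{3-r}/r}M^2\log M$ respectively, while $M^2/H=MF^{1/r}\le (FM^{-r})^{2^{3-r}/r}M^2$ for $F\le M^r$; taking square roots then closes the induction with exponent $2^{2-r}/r$ on $FM^{-r}+F^{-1}$. These are presentation issues, not gaps; the argument is sound.
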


\begin{proof}
	See \cite{IK}, Theorem 8.4.
\end{proof}

\begin{lem}\label{lem2.9}
	Let $\a>0$ be not an integer, $x\in(0,1)$, and $M\in\N$ be such that $M\geq x^{-1/(\a-\d)}$ for some $\d\in(0,\a)$. Then there exists absolute constant $d(\a,\d)\in(0,1)$ such that for any $1\leq M' \leq M$ 
	\begin{equation*}
	\sum_{M\leq n\leq M+M'}e(n^{\a}x)\ll_{\a} M^{1-d(\a,\d)}.
	\end{equation*}	
\end{lem}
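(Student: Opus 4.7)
The plan is to apply the Weyl bound (Lemma~\ref{lem2.8}) to $f(y) = y^\a x$ on $[M, 2M]$, choosing $r = \lceil \a \rceil + 1$, which is an integer satisfying $r \geq 2$ (as required by the lemma) and $r > \a$. Since $\a \notin \N$, the derivative
$$f^{(r)}(y) = \a(\a-1)\cdots(\a-r+1)\, x\, y^{\a - r}$$
has a nonzero leading coefficient, and $\frac{y^r}{r!}|f^{(r)}(y)| = c_r\, x\, y^\a$ lies in $[c_r M^\a x,\, c_r (2M)^\a x]$ on $[M, 2M]$, where $c_r := |\a(\a-1)\cdots(\a-r+1)|/r! > 0$. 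Thus Lemma~\ref{lem2.8} applies with $F := c_r (2M)^\a x$ and $B := 2^\a$, giving
$$\sum_{M \leq n \leq M+M'} e(n^\a x) \ll_\a (FM^{-r} + F^{-1})^{2^{2-r}/r}\, M \log M.$$

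The next step is to estimate the two terms inside the parentheses, exploiting both pieces of size information on $x$. The elementary bound $x \leq 1$ combined with $r > \a$ gives $FM^{-r} \ll_\a M^{\a - r} x \leq M^{-(r - \a)}$. For the second term, the hypothesis $M \geq x^{-1/(\a - \d)}$ (which uses $\a > \d$) is equivalent to $x^{-1} \leq M^{\a - \d}$, hence $F^{-1} \ll_\a M^{-\a} x^{-1} \leq M^{-\d}$. Setting $\eta := \min(\d,\, r - \a) > 0$, we obtain $FM^{-r} + F^{-1} \ll_\a M^{-\eta}$, and therefore
$$\sum_{M \leq n \leq M+M'} e(n^\a x) \ll_\a M^{1 - \eta\, 2^{2-r}/r}\, \log M.$$

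The last step is to absorb the $\log M$ factor into a slightly smaller power, which is routine: taking $d(\a, \d) := \eta\, 2^{2-r}/(2r) \in (0, 1/2]$ and using $\log M \ll_{\a, \d} M^{\eta\, 2^{2-r}/(2r)}$ completes the proof. I do not expect any genuine obstacle. The conceptual heart of the argument is the observation that the technical hypothesis $M \geq x^{-1/(\a - \d)}$ is tailored precisely so that the \emph{size} quantity $F$ in the Weyl bound satisfies $F \gg M^\d$, which is exactly what is needed to convert the Weyl estimate into a power saving over the trivial $O(M)$ bound.
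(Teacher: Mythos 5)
Your proof is correct and follows essentially the same route as the paper: both apply Weyl's bound (Lemma~\ref{lem2.8}) to $f(y)=xy^{\a}$ with $r=\lfloor\a\rfloor+2=\lceil\a\rceil+1$, use $x\le 1$ to control $FM^{-r}$ and the hypothesis $x^{-1}\le M^{\a-\d}$ to get $F^{-1}\ll M^{-\d}$, and absorb the $\log M$ into a slightly reduced power saving. Your write-up is in fact a bit more explicit than the paper's about the choice of $B$ and the final absorption of the logarithm, but there is no substantive difference.
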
 

\begin{proof} 
	We apply Lemma \ref{lem2.8} with $f(y)=xy^{\a}$, $r=\lfloor\a\rfloor+2$ (say), $F=C_1xM^{\a}$, and $B=C_2$ for appropriate constants $C_1=C_1(\a)$ and $C_2=C_2(\a)$. Since $x^{-1}\leq M^{\a-\d}$, we then obtain 
	$$
	\sum_{M\leq n\leq M+M'}e(n^{\a}x)\ll_{\a}\left( M^{-1}+x^{-1}M^{-\a}\right)^{2^{2-r}r^{-1}} M\log M \ll M^{1-d(\a,\d)}
	$$
	for some $d(\a,\d)\in(0,1)$, as desired.
\end{proof}

\section{Proof of Theorem \ref{th1.1}} \label{sec3}

\subsection{Proof of the first assertion of Theorem \ref{th1.1}} 

Fix $\a>0$, $\a\notin\N$, and let $c(\a)$ be defined as in Corollary \ref{cor2.4}. Let $\{c_k\}$ be a sequence which obeys (\ref{1.3}), (\ref{1.4}), and $\sum_kc_kk^{-c(\a)}<\infty$. Fix arbitrary positive numbers $a<b$; we will prove that the series (\ref{1.1}) and (\ref{1.2}) converge uniformly on $[a,b]$. To do this, it is enough to show that for any $\e>0$ there exists $l=l(\e)\in\N$ such that for all $L>l$ we have
\begin{equation}\label{3.1}
\sup_{x\in[a',b']} \left|\sum_{k=l}^Lc_ke(k^{\a}x)\right|<\e,
\end{equation}
where $a'=a/(2\pi)$, $b'=b/(2\pi)$. Let $b_l=\sup_{k\geq l}c_kk^{1-c(\a)}$ and $V_k(x)=\sum_{n=1}^ke(n^{\a}x)$. Then using consequently Abel's summation, Corollary \ref{cor2.4}, and Lemma \ref{lem2.1}, for all $x\in[a',b']$ we have (here the implied constants in $\ll$ are allowed to depend on $\a$, $a$, and $b$)
\begin{multline*}
\left|\sum_{k=l}^Lc_ke(k^{\a}x)\right| = \left|\sum_{k=l}^{L-1}(c_k-c_{k+1})V_k(x) - c_lV_{l-1}(x)+c_LV_L(x) \right|  \\
\ll b_l+\sum_{k=l}^{L-1}|c_k-c_{k+1}|k^{1-c(\a)} \ll
 b_l + \sum_{k\geq l} c_kk^{-c(\a)}. 
\end{multline*}
Now (\ref{3.1}) follows from Lemma \ref{lem2.0} by taking $l$ sufficiently large depending on $\a$, $a$, $b$, and $\e$. This concludes the proof.

\subsection{Proof of the second assertion of Theorem \ref{th1.1}}

We first prove the claim for the series (\ref{1.1}). Recall that a sequence $\{ a_n\}$ is said to be uniformly distributed modulo $1$ if for any fixed interval $[c,d]\subseteq [0,1]$ 
$$\#\{1\leq n\leq N: a_n \pmod 1 \in (c,d) \} =(d-c+o(1))N, \quad N\to\infty.
$$
Let $\a\in(0,1)$. Fix any $x\neq0$; we may suppose that $x>0$. It is well-known (see, for example, \cite{Mur}, Exercise 11.6.3) that the sequence $\{\sigma n^{\a}\}$ is uniformly distributed modulo $1$ for any $\sigma\neq0$; we take $\sigma=x/(2\pi)$. Now let $m$ be large enough depending on $\a$ and $x$; it follows that there exists $n=n(m)\in[m,2m]$ such that
$$ \frac{n^{\a}x}{2\pi} \in \left(\frac18,\frac14\right) \pmod 1.
$$
Since for any $r>0$ we have
$$ (n+r)^{\a} - n^{\a} = n^{\a}\left( (1+r/n)^{\a} - 1\right) = n^{\a}(\a r/n +O(r^2/n^2))=\frac{\a r}{n^{1-\a}}+O\left(\frac{r^2}{n^{2-\a}}\right),
$$
we see that
$$\frac{(n+r)^{\a}x}{2\pi} \in \left(\frac18,\frac38\right) \pmod 1
$$
for all $r=0,1,\ldots, r_0 = \lfloor n^{1-\a}\frac{\pi}{8\a x} \rfloor$ (say). It means that for these $r$ we have
\begin{equation}\label{3.2}
(n+r)^{\a}x \in   \left(\frac{\pi}{4},\frac{3\pi}{4}\right) \pmod {2\pi}.
\end{equation} 
On the other hand, if the series $\sum_kc_k\sin k^{\a}x$ converges, then for any $\e>0$ there exists $m_0$ such that for all $m_2>m_1>m_0$ 
$$\left|\sum_{k=m_1}^{m_2}c_k\sin k^{\a}x\right| <\e .
$$
Now we take any $m>m_0$ (again large enough depending on $x$ and $\a$) and set $m_1=n=n(m)$, $m_2=n+r_0$; then by (\ref{1.3}) and (\ref{3.2}) we get
$$\e \gg \sum_{k=n}^{n+r_0}c_k \gg c_{3m}r_0 \gg c_{3m}m^{1-\a},
$$
since $m\gg_{\a,x}1$. It follows that $c_mm^{1-\a}\ll\e$ whenever $m$ is large enough, and the claim follows.

The proof for the case of the series (\ref{1.2}) is completely similar; we just need to consider the interval $(-1/8,0)$ (say) instead of $(1/8,1/4)$, and then we get (\ref{3.2}) with the interval $(\frac{\pi}{4}, \frac{3\pi}{4})$ replaced by $(-\frac{\pi}{4}, \frac{\pi}{4})$. This concludes the proof of Theorem {\ref{1.1}}.

\section{Proof of Theorem \ref{th1.2}}\label{sec4}

We first show that the condition is necessary. Let $\a>0$ be arbitrary, and suppose that the series (\ref{1.1}) converges uniformly on $(0,a)$ for some $a>0$. Then for any $\e>0$ there exists $l=l(\e)$ such that for all $L>l$ 
$$
\sup_{x\in(0,a)}\left|\sum_{k=l}^Lc_k\sin k^{\a}x\right|<\e.
$$
We can assume that $l$ is large enough depending on $\a$. Take $L=\lfloor 2^{1/\a}l\rfloor$ and $x=\pi l^{-\a}/4\in(0,a)$; then $L\ll _{\a}l$ and $c_Ll\ll_{\a,A} \sum_{k=l}^{L}c_k \ll \e$, and hence $c_kk\to0$, $k\to\infty$, as desired.

Now we prove that the condition is sufficient. Fix noninteger $\a>0$, and let $c_kk=o(1)$, $k\to\infty$. We need to show that  the series (\ref{1.1}) converges uniformly on any bounded subset of~$\R$. In view of Theorem \ref{th1.1} we may restrict our attention to the interval $(0,x_0(\a))$, where $x_0(\a)$ is sufficiently small depending on $\a$; it is enough to show that
\begin{equation}\label{4.1}
\sup_{L>l}\sup_{x\in(0,x_0(\a))} \left|\sum_{k=l}^L c_k\sin k^{\a}x \right| \ll b_l,
\end{equation}
where $b_l=\sup_{k\geq l}c_kk$ (in this section the implied constants are allowed to depend on $\a$). Fix arbitrary $x\in(0,x_0(\a))$. There are two cases to consider. 

\medskip 

a) Let $\a\in(0,1)$. Define $L_0=\lfloor x^{-1/\a}\rfloor+1$. First, we can use the bound
\begin{equation}\label{4.2}
\left|\sum_{k=l}^{\min\{L,L_0\}}c_k\sin k^{\a}x\right| \leq \sum_{k=l}^{L_0}c_kk^{\a}x \leq b_lx\sum_{k\leq L_0}k^{\a-1} \ll b_lxL_0^{\a} \ll b_l 
\end{equation} 
to reduce to the case $l>L_0$. Define $S_k(x)=\sum_{n=1}^k\sin k^{\a}x$. Applying Lemma \ref{lem2.6} and Lemma \ref{lem2.1}, for any $l>L_0$ we have 
\begin{multline*}
\left|\sum_{k=l}^Lc_k\sin k^{\a}x\right| \ll b_l +\sum_{k=l}^{L-1}|c_k-c_{k+1}||S_k(x)| \ll b_l + c_ll^{1-\a}x^{-1} +\sum_{k=l}^{L}c_kk^{-\a}x^{-1}\\
\ll b_l\left(1+ L_0^{-\a}x^{-1}+\sum_{k=l}^{L}k^{-\a-1}x^{-1} \right) \ll b_l\left(1+ L_0^{-\a}x^{-1} \right)\ll b_l, 
\end{multline*}
as desired.   

\medskip

b) Let $\a>1$. Define $L_0=\lfloor x^{-1/\a}\rfloor+1$ and $L_1=\lfloor (2x\a)^{-1/(\a-1)} \rfloor$. Due to the bound (\ref{4.2}) we again may suppose that $l>L_0$. Suppose that $l \leq L_1$;  For $k\geq L_0$, set $\overline{S}_k(x)=\sum_{n=L_0}^k\sin n^{\a}x$. By Lemmas \ref{lem2.7} and \ref{lem2.1} we have
\begin{multline*}\label{4.4}
\left|\sum_{k=l}^{\min\{L,L_1\}}c_k\sin k^{\a}x\right| \ll b_l +\sum_{k=l}^{L_1-1}|c_k-c_{k+1}||\overline{S}_k(x)| \ll b_l +\sum_{k=l}^{L_1}|c_k-c_{k+1}|x^{-1/\a}\ll \\ b_l\left(1+l^{-1}x^{-1/\a}+\sum_{k=l}^{L_1}k^{-2}x^{-1/{\a}}\right) \ll b_l\left(1+l^{-1}x^{-1/\a}\right) \ll b_l(1+L_0^{-1}x^{-1/\a}) \ll b_l.
\end{multline*}
Thus we may assume that $l>L_1$, and so
$$
\left|\sum_{k=l}^Lc_k\sin k^{\a}x\right| \ll b_l +\sum_{k=l}^{L}|c_k-c_{k+1}||\widetilde{S}_k(x)|,
$$
where $\widetilde{S}_k(x)=\sum_{n=l}^k\sin k^{\a}x$. Lemma \ref{lem2.9} implies that $\widetilde{S}_k(x) \ll k^{1-d(\a)}$ for some $d(\a)>0$. As before, Lemma \ref{lem2.1} gives us
$$
\left|\sum_{k=l}^Lc_k\sin k^{\a}x\right| \ll b_l +\sum_{k=l}^{L}c_kk^{-d(\a)}\ll b_l\left(1+ \sum_{k=L_1}^{\infty}k^{-d(\a)-1} \right) \ll b_l.
$$

\smallskip

Thus, (\ref{4.1}) follows for any $x\in(0,x_0(\a))$ and $L>l$. This completes the proof of Theorem \ref{th1.2}.

\section{Proof of Theorem \ref{th1.3}} \label{sec5} 

We need the following two results. 

\begin{theorem}[\cite{Bes}]\label{th5.1}
	Let $p_1, \ldots, p_s$ be distinct primes, and $b_1,\ldots,b_s$ be positive integers not divisible by any of these primes. Let, further, $x_1,\ldots,x_s$ be positive real roots of the equations
$$x^{n_1}-p_1b_1=0, \, \ldots, \, x^{n_s}-p_sb_s=0,
$$	
respectively, and $P(y_1,...,y_s)$ be a nonzero polynomial with rational coefficients of degree at most $n_i-1$ with respect to $y_i$ for each $i=1,\ldots,s$. Then	$P(x_1,...,x_s)\neq0$.
\end{theorem}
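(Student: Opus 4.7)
The theorem is equivalent to the assertion that $[\Q(x_1,\ldots,x_s):\Q] = n_1 n_2\cdots n_s$, since the $n_1\cdots n_s$ monomials $x_1^{j_1}\cdots x_s^{j_s}$ with $0\le j_i\le n_i-1$ would then form a $\Q$-basis of this field and hence be $\Q$-linearly independent. My plan is to prove this by induction on $s$ via a tower argument: it suffices to establish, for every $i$, that $x^{n_i}-p_ib_i$ is irreducible over $K_{i-1} := \Q(x_1,\ldots,x_{i-1})$.

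For the base case $i=1$, I would obtain irreducibility of $x^{n_1}-p_1b_1$ over $\Q$ from the Newton polygon at $p_1$: since $p_1\nmid b_1$, we have $v_{p_1}(p_1b_1)=1$, so the polygon is a single segment from $(0,1)$ to $(n_1,0)$ of slope $-1/n_1$; as $1$ and $n_1$ are coprime, the polynomial is irreducible over $\Q_{p_1}$ and hence over $\Q$.

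For the inductive step, suppose the claim holds for $s-1$ in place of $s$. Given a hypothetical nontrivial polynomial relation, group the terms by the power of $x_s$ to obtain $\sum_{j=0}^{n_s-1} A_j(x_1,\ldots,x_{s-1})\, x_s^j = 0$, where each $A_j\in\Q[y_1,\ldots,y_{s-1}]$ has $y_i$-degree at most $n_i-1$. The induction hypothesis says that any $A_j$ nonzero as a polynomial produces a nonzero value at $(x_1,\ldots,x_{s-1})$; in particular at least one coefficient of the polynomial $\sum_j A_j(x_1,\ldots,x_{s-1})\, y^j\in K_{s-1}[y]$ is nonzero. This contradicts vanishing at $y=x_s$ provided $x_s$ has degree exactly $n_s$ over $K_{s-1}$, i.e., provided $x^{n_s}-p_sb_s$ is irreducible over $K_{s-1}$.

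The crux, and the step I expect to be the main obstacle, is this last irreducibility claim. The natural strategy is to extend the $p_s$-adic valuation from $\Q$ to $K_{s-1}$, pick a prime $\mathfrak{p}$ above $p_s$, and compute the Newton polygon of $y^{n_s}-p_sb_s$ in the completion at $\mathfrak{p}$. Since each $x_i^{n_i}=p_ib_i$ for $i<s$ is a unit at $p_s$ (because $p_s\neq p_i$ and $p_s\nmid b_i$), each $x_i$ is a $\mathfrak{p}$-adic unit, so $v_{\mathfrak{p}}(p_sb_s)$ equals the ramification index $e := e_{\mathfrak{p}/p_s}$. The polygon is then a single segment of slope $-e/n_s$, and the Newton-polygon criterion yields irreducibility precisely when $\gcd(e,n_s)=1$. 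The delicate case is when $p_s$ divides some $n_i$ with $i<s$, potentially producing wild ramification at $p_s$ in $K_{s-1}$; handling this cleanly would likely require strengthening the inductive invariant to simultaneously track the ramification behavior in the tower, or reducing to Kummer-theoretic linear disjointness in the spirit of Besicovitch's original combinatorial argument, which is the route I would ultimately take if the direct Newton-polygon step does not go through.
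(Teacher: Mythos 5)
This statement is not proved in the paper at all: it is Besicovitch's theorem, imported verbatim with the citation \cite{Bes} serving as the entire ``proof,'' so your attempt can only be judged on its own merits. Your reduction of the statement to $[\Q(x_1,\ldots,x_s):\Q]=n_1\cdots n_s$, the tower/induction framework, and the base case (Eisenstein, equivalently the Newton polygon at $p_1$) are all correct. The problem is that essentially the whole content of the theorem sits in the one step you leave open: irreducibility of $y^{n_s}-p_sb_s$ over $K_{s-1}$ when $\gcd(e_{\mathfrak{p}/p_s},n_s)>1$. Your Newton-polygon computation is right as far as it goes, and your diagnosis of where it can fail (only when $p_s$ ramifies in $K_{s-1}$, which requires $p_s\mid n_i$ for some $i<s$) is also right, but this is not a degenerate corner case. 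Take $s=2$, $p_1=3$, $b_1=b_2=1$, $n_1=n_2=2$, $p_2=2$: then $K_1=\Q(\sqrt3)$, the prime $2$ ramifies with $e=2$, and the Newton polygon of $y^2-2$ at the prime above $2$ is a single segment of slope $-1$, which certifies nothing (the polynomial is in fact irreducible there, but the polygon cannot see it). Worse, in the only application the paper makes of this theorem, all the $n_i$ equal a fixed denominator $v\geq 2$ and the $p_i$ run over all primes up to $L$, so for large $L$ the inconclusive case $\gcd(e,n_s)>1$ is the generic one, not an exception.

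Your two proposed escape routes --- strengthening the inductive invariant to track ramification, or ``Kummer-theoretic linear disjointness'' --- are named but not executed, and the second is close to a restatement of the theorem being proved. A standard way to actually close the gap is to note that $K_{s-1}\subset\R$ and $p_sb_s>0$, so by the Vahlen--Capelli criterion irreducibility of $y^{n_s}-p_sb_s$ over $K_{s-1}$ reduces to showing $p_sb_s\notin K_{s-1}^{\,q}$ for each prime $q\mid n_s$ (the $-4c^4$ obstruction being impossible in a real field); but that statement still requires a genuine argument beyond norms or a single valuation precisely when $q\mid n_1\cdots n_{s-1}$. As it stands, the proposal is a correct skeleton with the load-bearing step missing.
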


For a number $a\in\R$, we use the standard notation $\|a\|=\min_{m\in\Z}|a-m|$. 

\begin{lem}\label{lem5.2}
Let $\overline{\a}=(1,\a_1,\ldots,\a_{\nu})\in \R^{\nu+1}$ be a vector whose components are linearly independent over $\Q$. Then for any $\d\in(0,1/2)$ and $\beta_1,\ldots,\beta_{\nu}\in\R$ there are infinitely many integers $x$ with
$$\|x\a_j+\beta_j\|<\d, \qquad j=1,\ldots,\nu. 
$$
\end{lem}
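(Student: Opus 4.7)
The plan is to deduce the lemma from Weyl's equidistribution criterion applied to the sequence $(x\a_1,\ldots,x\a_\nu)\pmod 1$ on the torus $\T^\nu$. Once equidistribution is proven, the open box consisting of points lying within circular distance $\d$ of $(-\b_1,\ldots,-\b_\nu)\pmod 1$ in each coordinate will be visited by the trajectory infinitely often, which is precisely the required approximation.

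The first step is to verify Weyl's criterion: for every nonzero integer vector $(k_1,\ldots,k_\nu)\in\ZZ^\nu$,
$$\frac{1}{N}\sum_{x=1}^N e\bigl(x(k_1\a_1+\cdots+k_\nu\a_\nu)\bigr) \longrightarrow 0 \quad \text{as } N\to\infty.$$
The assumption that $1,\a_1,\ldots,\a_\nu$ are linearly independent over $\QQ$ forces $\theta:=k_1\a_1+\cdots+k_\nu\a_\nu$ to be irrational for any nonzero $(k_1,\ldots,k_\nu)$, so the geometric sum estimate yields $\bigl|\sum_{x=1}^N e(x\theta)\bigr| \ll \|\theta\|^{-1}$ independently of $N$, and division by $N$ gives the required convergence. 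This establishes equidistribution of $(x\a_1,\ldots,x\a_\nu)\pmod 1$ in $[0,1)^\nu$ as $x$ runs over $\{1,2,\ldots,N\}$.

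The second step is to translate the approximation condition into set membership on $\T^\nu$. Setting $\gamma_j=\{-\b_j\}\in[0,1)$, the inequality $\|x\a_j+\b_j\|<\d$ is equivalent to $\{x\a_j\}$ lying within circular distance $\d$ of $\gamma_j$, i.e. in the arc $B_j\subset\T$ of length $2\d$ centered at $\gamma_j$ (the hypothesis $\d<1/2$ ensures $B_j$ is a proper subarc). The box $B=B_1\times\cdots\times B_\nu\subset\T^\nu$ has volume $(2\d)^\nu>0$ and boundary of measure zero, so by equidistribution
$$\#\bigl\{1\le x\le N:\ (x\a_1,\ldots,x\a_\nu)\pmod 1\in B\bigr\} = \bigl((2\d)^\nu+o(1)\bigr)N,$$
which tends to infinity with $N$, producing infinitely many integers $x$ as required.

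I do not anticipate any serious obstacle: the argument rests on two standard facts — Weyl's criterion for equidistribution on $\T^\nu$ and the elementary bound $\bigl|\sum_{x=1}^N e(x\theta)\bigr|\ll \|\theta\|^{-1}$ for $\theta\notin\ZZ$ — together with the routine translation of the inhomogeneous shift by $\b_j$ into a rotated box on the torus.
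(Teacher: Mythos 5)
Your argument is correct and follows the same route the paper takes: the paper simply cites the multidimensional Weyl equidistribution criterion ([KN], Theorem 6.2) and notes the lemma "easily follows," while you supply the standard details (irrationality of $k_1\a_1+\cdots+k_\nu\a_\nu$ from the linear independence hypothesis, the geometric-sum bound, and the translation into a box on the torus). No discrepancies to report.
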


This well-known lemma easily follows from the multidimensional Weyl's criterion of equidistribution modulo $1$ (see, for instance, \cite{KN}, Theorem 6.2). We also note that in \cite{Kor} (see Corollary 3.4) a stronger result was proved: for any $\d\in(0,1/2)$ there exists $c=c(\overline{\a},\d)>0$ such that for any $\beta_1,\ldots,\beta_{\nu}\in\R$ each interval of length $c$ contains a number $x$ with $\|x\a_j+\beta_j\|<\d$, \, $j=1,\ldots,\nu$.

\medskip 

Fix a noninteger rational number $\a>0$. Let $n_1,\ldots,n_{\nu}$ be square-free integers up to a positive integer $L$. We first show that the numbers $n_j^{\a}$ are linearly independent over $\Q$. If we write $\a=u/v$, where $u,v\in\N$, $u=qv+r$, and $0<r<v$, then  clearly it is enough to show that the numbers $n_j^{r/v}$ are linearly independent over $\Q$. Let $p_1,\ldots, p_s$ be primes up to $L$. Then any linear combination of the numbers $n_1^{r/v},\ldots, n_{\nu}^{r/v}$ with rational coefficients can be thought of as the value of an appropriate polynomial $P(x_1,\ldots,x_s)$ at the point $x_1=p_1^{1/v}, \ldots, x_s=p_s^{1/v}$; for this $P$ we have $\deg_{x_i}P=r<v$ for each $i=1,\ldots,s$. Applying Theorem \ref{th5.1} with $b_i=1$ and $n_i=v$ for all $i=1,\ldots,s$, we see that this value can be equal to zero only if so are all the coefficients of our linear combination. Thus, the numbers $n_{j}^{\a}$ are linearly independent over $\Q$.

Now we show that there exists $x_0=x_0(L,\a)\in\R$ such that
\begin{equation}\label{5.1}
\sin n^{\a}x_0\geq 0.99 
\end{equation}
for all square-free $n\leq L$. To do this, note that the numbers $1,n_1^{\a}/(2\pi),...,n_{\nu}^{\a}/(2\pi)$ are also linear independent over $\Q$ (since $2\pi$ is transcendental, it immediately follows from the independence of $n_j^{\a}$); applying Lemma \ref{lem5.2} with $\d=10^{-2}$ and $\beta_i=-1/4$, we see that there exist $x_0=x_0(L,\a)\in\R$ and integers $m_j$ with
$$ | n_j^{\a}x_0/(2\pi) - m_j -1/4 |  <  10^{-2}.
$$
Then
\begin{equation*}
|n_j^{\a}x_0 - 2\pi m_j - \pi/2|<\pi/50;
\end{equation*}
so, we have $n_j^{\a}x_0\in (\pi/2-\pi/50,\pi/2+\pi/50) \pmod{2\pi}$, and (\ref{5.1}) follows. 

Now we are ready to prove Theorem \ref{th1.3}. Trivially, the condition $\sum_kc_k<\infty$ is sufficient for the uniform convergence of the series (\ref{1.1}) on $\R$, so we prove the converse. This uniform convergence is equivalent to the following: for any $\e>0$ there exists $l=l(\e)\in\N$ such that for all $L>l$ 
we have 
\begin{equation}\label{5.2}
\sup_{x\in\R}\left|\sum_{k=l}^Lc_k\sin k^{\a}x\right| < \e,
\end{equation}
and we may suppose that $l$ is large enough. Fix $L$. Using Abel's summation, we can write
\begin{equation}\label{5.3}
\sum_{k=l}^Lc_k\sin k^{\a}x = \sum_{k=l}^{L-1}(c_k-c_{k+1})S_k(x) - c_lS_{l-1}+c_LS_L, 
\end{equation}
where $S_k(x)=\sum_{n=1}^k\sin n^{\a}x$. It is well-known that the number of square-free positive integers up to $k$ is $\frac{6}{\pi^2}k+O(k^{1/2})$, and $\frac{6}{\pi^2}=0.607...$. Let $A(n)$ be the indicator function of square-free integers. Then for $x_0=x_0(L,\a)$ and all $k$ with $l\leq k\leq L$, we have by (\ref{5.1}) 
\begin{equation*}
S_k(x_0)=\sum_{n\leq k} A(n)\sin n^{\a}x_0 + \sum_{n\leq k} (1-A(n))\sin n^{\a}x_0 \geq 0.99\cdot0.6k-0.4k > 0.1k,
\end{equation*}
since $l$ (and hence $k$) is large enough. From here, (\ref{5.3}), and monotonicity of $c_k$, we have
$$\sum_{k=l}^Lc_k\sin k^{\a}x_0 > 0.1\sum_{k=l}^{L-1}(c_k-c_{k+1})k - c_ll + 0.1c_LL= 0.1\sum_{k=l+1}^Lc_k  - 0.9c_ll. 
$$
This bound implies that the series $\sum_kc_k$ converges, since otherwise $\sum_{k=l}^Lc_k\sin k^{\a}x_0$ could be made arbitrarily large by choice of $L$, and it would contradict (\ref{5.2}). This concludes the proof.


\begin{thebibliography}{99}

\bibitem[Bes]{Bes} {\sc A. S. Besicovitch},
\emph{On the linear independence of fractional powers of integers}, J. London Math. Soc., 15 (1940), 3-6.

\bibitem[CJ]{CJ}	{\sc T.W. Chaundy, A.E. Jolliffe}, 
\emph{The uniform convergence of a certain class of trigonometric series}, Proc. London Math. Soc., 15 (1916), 214–216.

\bibitem[HB]{HB} {\sc D. R. Heath-Brown}, 
\emph{A new kth derivative estimate for exponential sums via Vinogradov's mean value}, Analytic and combinatorial number theory, Collected papers. On the occasion of the 125th anniversary of the birth of Academician Ivan Matveevich Vinogradov, Proc. Steklov Inst. Math., 296 (2017), 88–103.

	
\bibitem[IK]{IK} {\sc H. Iwaniec, E. Kowalski},
\emph{ Analytic number theory}, Colloquium Publications -- American Mathematical Society, Volume 53, 2004, 615 pp..

\bibitem[Kes]{Kes} {\sc S. Keska}, \emph{On the uniform convergence of sine series with square root}, J. Func. Sp., (2019), 1–11.	
	
\bibitem[Kor]{Kor} {\sc M. A. Korolev},
\emph{On large values of the Riemann zeta-function on short segments of the critical line}, Acta Arithm. 166.4 (2014), 340-390. 

\bibitem[KN]{KN} {\sc L. Kuipers, H. Niederreiter},
\emph{Uniform distribution of sequences},  John Wiley \& Sons Inc., 1974, 406pp.

\bibitem[Lei]{Lei} {\sc L. Leindler}, 
\emph{On the uniform convergence and boundedness of a certain class of sine series}, Anal. Math., 27(4) (2001), 279–285.

\bibitem[Mur]{Mur}{\sc M. Ram Murty},
\emph{Problems in Analytic Number Theory}, 2nd ed., 2008.

\bibitem[Nur]{Nur} {\sc J.R. Nurcombe}, 
\emph{On the uniform convergence of sine series with quasimonotone coefficients}, J.
Math. Anal. Appl., 166(2) (1992), 577–581.
	
\bibitem[Og]{Og} {\sc K. Oganesyan}, 
\emph{Uniform convergence criterion for non-harmonic sine series}, Sb. Math., 212:1 (2021), 70-110. 

\bibitem[Ste]{Ste} {\sc S.B. Stechkin}, 
\emph{Trigonometric series with monotone type coefficients}, 
Proc. Steklov Inst. Math. Suppl., 1 (2001), 214–224.

\bibitem[Tik05]{Tik05} {\sc S. Tikhonov}, 
\emph{Embedding results in questions of strong approximation by Fourier series}, 
Acta Sci. Math. (Szeged), 72 (2006), 117–128.

\bibitem[Tik07]{Tik07} {\sc S. Tikhonov}, 
\emph{Trigonometric series with general monotone coefficients}, 
J. Math. Anal. Appl., 326 (2007), 721–731.

\bibitem[Sel]{Sel}  {\sc  M. G. Goluzina, A. A. Lodkin, B.M. Makarov, A. N. Podkorytov} 
\emph{Selected problems in real analysis}, American Mathematical Society; New edition, 1992, 370pp.

\end{thebibliography}
\end{document}